\documentclass[11pt]{article}

\usepackage[margin=20mm]{geometry}
\usepackage{amsmath,amssymb,amsthm,mathtools}
\usepackage{stmaryrd}
\usepackage{xcolor}
\usepackage{hyperref}
\usepackage{todonotes}

\theoremstyle{plain}
\newtheorem{theorem}{Theorem}[section]
\newtheorem*{theorem*}{Theorem}
\newtheorem{definition}[theorem]{Definition}
\newtheorem*{definition*}{Definition}
\newtheorem{proposition}[theorem]{Proposition}
\newtheorem*{proposition*}{Proposition}
\newtheorem{lemma}[theorem]{Lemma}
\newtheorem*{lemma*}{Lemma}
\newtheorem{corollary}[theorem]{Corollary}
\newtheorem*{corollary*}{Corollary}

\newtheorem{example}[theorem]{Example}
\newtheorem*{example*}{Example}
\newtheorem{remark}[theorem]{Remark}
\newtheorem*{remark*}{Remark}

\newtheorem*{notation*}{Notation}

\hypersetup{
  colorlinks=true,
  linkcolor=blue,
  filecolor=magenta,
  urlcolor=cyan,
  citecolor=red,
  pdftitle={An analytic approach to the finite R-transform},
  pdfauthor={Octavio Arizmendi and Katsunori Fujie},
}

\newcommand{\calP}{\mathcal{P}}

\newcommand{\calM}{\mathcal{M}}

\DeclareMathOperator{\supp}{supp}

\newcommand{\meas}[1]{\mu\llbracket #1 \rrbracket}
\newcommand{\Lap}[1]{\mathcal{L}[#1]}
\newcommand{\abs}[1]{\left| #1 \right|}

\newcommand{\weakto}{\xrightarrow{w}}

\newcommand{\coef}{\widetilde{e}}

\newcommand{\N}{\mathbb{N}}

\newcommand{\R}{\mathbb{R}}
\newcommand{\C}{\mathbb{C}}
\newcommand{\E}{\mathbb{E}}

\renewcommand{\epsilon}{\varepsilon}

\numberwithin{equation}{section}

\begin{document}

\title{An analytic approach to the finite $R$-transform}
\author{Octavio Arizmendi and Katsunori Fujie}
\date{\today}

\maketitle

\begin{abstract}
We revisit Marcus' finite free analogue of Voiculescu $R$-transform from an analytic viewpoint.
By relating the finite free Fourier transform to the Laplace transform, we study the finite $R$-transform through logarithmic potentials and Legendre transforms.
Under suitable assumptions, we prove that the finite $R$-transform of a polynomial differs from the Voiculescu $R$-transform of its empirical root distribution by $O(N^{-1})$.
As an application, we obtain an analytic proof of the convergence of finite free additive convolution to free additive convolution.
\end{abstract}

\section{Introduction}

Finite free probability is a relatively recent branch of probability theory, initiated by the work of Marcus, Spielman, and Srivastava on the resolution of the Kadison--Singer problem and the construction of Ramanujan graphs \cite{MSS1,MSS2,MSS22}.
Its central theme is to develop polynomial analogues of the results in free probability.
By now, many such analogies have been established; see \cite{AFPU, AGVP, AP, F, JSS, MSS22} for example.

Let $\calP_N(\C)$ denote the set of monic polynomials over $\C$ of degree $N$.
For $p \in \calP_N(\C)$, we write
\[
  p(x)
  = \prod_{i=1}^N (x-\lambda_i(p))
  = \sum_{k=0}^N x^{N-k} (-1)^k \binom{N}{k} \coef_k^{(N)}(p),
\]
where $\{\lambda_i(p)\}_{i=1}^N$ are the roots of $p$, and $\coef_k^{(N)}(p)$ denotes the $k$-th normalized elementary symmetric polynomial of the roots, namely
\[
  \coef_k^{(N)}(p)
  = \binom{N}{k}^{-1}
    \sum_{i_1<\cdots<i_k}
    \lambda_{i_1}(p)\cdots\lambda_{i_k}(p),
\]
with the convention $\coef_0^{(N)}(p)=1$.
Although this normalization may look unusual at first glance, it is particularly convenient in finite free probability.
Throughout the paper, we always identify a monic polynomial $p$, its roots $\{\lambda_i(p)\}_{i=1}^N$, and its coefficients $\{\coef_k^{(N)}(p)\}_{k=0}^N$.
We also associate to $p$ its empirical root distribution $\meas{p} = \frac{1}{N}\sum_{i=1}^N \delta_{\lambda_i(p)}.$
When the degree $N$ is clear from the context, we often omit the superscript $(N)$ and simply write $\coef_k(p)$.
For a subset $S\subset\C$, we write $\calP_N(S)$ for the subset of $\calP_N(\C)$ consisting of polynomials whose roots lie in $S$.
In particular, if $p\in\calP_N(\R)$, then $\meas{p}$ is a probability measure on $\R$.

For $p,q\in\calP_N(\C)$, the finite free additive convolution $p\boxplus_N q \in \calP_N(\C)$ is defined by
\[
  \coef_k(p\boxplus_N q)
  = \sum_{i=0}^k \binom{k}{i}\coef_i(p)\coef_{k-i}(q),
  \qquad k=0,\dots,N.
\]
This binary operation already appeared in classical work of Szeg\H{o} and Walsh \cite{Sz22,Walsh}; see also \cite{Mar66}.
A fundamental contribution of Marcus, Spielman, and Srivastava was the discovery of its random matrix expression:
if
\[
  p(x)=\det(xI-A),
  \qquad
  q(x)=\det(xI-B),
\]
where $A$ and $B$ are $N\times N$ normal matrices, then
\[
  p\boxplus_N q(x)
  = \E_U\!\left[\det(xI-A-UBU^*)\right],
\]
where $U$ is an $N\times N$ Haar unitary matrix.
This formula reveals a close connection between finite free probability and free probability, since the random matrices $A$ and $UBU^*$ are asymptotically free in the large $N$ limit.

Motivated by this connection, one is naturally led to ask whether finite free convolution approximates free additive convolution in the large-degree limit.
More precisely, if $p_N,q_N\in\calP_N(\R)$ satisfy $\meas{p_N}\weakto\mu$ and $\meas{q_N}\weakto\nu$, it is natural to expect that $\meas{p_N\boxplus_N q_N}\weakto \mu\boxplus\nu$, where $\mu\boxplus\nu$ denotes the free additive convolution of probability measures $\mu,\nu\in\calM(\R)$.
This convergence is by now well established, see \cite{AP, F, Marcus}.
We provide an alternative proof from a direct analytic study of the finite $R$-transform, see Corollary \ref{cor:finitefreeadditive}.

A basic tool for studying $\mu\boxplus\nu$ is Voiculescu $R$-transform, which linearizes free additive convolution.
It plays the role in free probability analogous to that of the logarithm of the characteristic function (Fourier transform) in classical probability.
Recall that, for a probability measure $\mu\in\calM(\R)$, the Cauchy transform is defined by
\[
  G_\mu(z)=\int_\R \frac{1}{z-t}\,d\mu(t),
  \qquad
  z\in\C\setminus\supp(\mu).
\]
It is known that $G_\mu$ is univalent in a suitable region near infinity, and the $R$-transform is then defined by
\[
  R_\mu(z)=G_\mu^{-1}(z)-\frac{1}{z}
\]
for $z$ near $0$.
Its key property is the linearization formula: $R_{\mu\boxplus\nu}=R_\mu+R_\nu$.
The $R$-transform and the associated subordination theory provide a powerful framework for analyzing free convolution; see \cite{BV92, BV98, MS, NS, V85, V86}.

The convergence of finite free convolution to free convolution was first suggested by Marcus through a finite analogue of the $R$-transform \cite{Marcus} .
Roughly speaking, Marcus observed that this finite transform should converge to Voiculescu $R$-transform as the degree tends to infinity.
However, the analytic content of this observation was not fully developed there.
Later, Arizmendi and Perales \cite{AP} gave a rigorous proof of the convergence of finite free convolution by introducing finite free cumulants, namely the coefficients of Marcus' finite $R$-transform, and showing that these cumulants converge to the free cumulants of the limiting measure.
Their argument is purely combinatorial and works under compact support assumptions.
See also \cite[Section 3]{MergnyPotters2022} for related remarks on the convergence of the finite $R$-transform.

The purpose of the present paper is to revisit Marcus' finite $R$-transform from an analytic point of view and to make its asymptotic relation with Voiculescu $R$-transform precise.
Our starting point is the finite free Fourier (FFF) transform; for $p(x)=\sum_{k=0}^N x^{N-k}(-1)^k\binom{N}{k}\coef_k^{(N)}(p)\in\calP_N(\C)$, define
\[
  \widehat{p}(s)
  = \sum_{k=0}^N \frac{(-1)^k\coef_k^{(N)}(p)}{k!} s^k
  = \sum_{k=0}^N \frac{\coef_k^{(N)}(p)}{k!}(-s)^k.
\]
For formal power series $f,g\in\C[[s]]$, we write $f \overset{N}{=} g$ if $f\equiv g \pmod{s^{N+1}}$.
Then one can check that $\widehat{p}(\partial_x)x^N = p(x),$ and $r=p\boxplus_N q$ if and only if $\widehat{r}(s)\overset{N}{=}\widehat{p}(s)\widehat{q}(s)$ for $p,q,r\in\calP_N(\C)$.
This motivates the definition of the finite $R$-transform by Marcus:
\[
  R_p^{(N)}(s)
  = -\frac{1}{N} \frac{d}{ds} \log \widehat{p}(Ns),
\]
which will be discussed more precisely in Section~2.
Thus, the finite $R$-transform linearizes finite free convolution $R_{p\boxplus_N q}^{(N)} \overset{N-1}{=} R_p^{(N)} + R_q^{(N)}$ in the sense of equality modulo $s^N$.

A key idea is that the FFF transform $\widehat{p}$ can be expressed in terms of the Laplace transform of $p$.
This allows us to study $R_p^{(N)}$ through logarithmic potentials, Legendre transforms, and a quantitative Laplace principle.
In this way, the finite $R$-transform acquires an analytic interpretation and can be compared directly with the Voiculescu $R$-transform of the empirical root distribution.

Our main result is as follows.
\begin{theorem} \label{thm:main}
Let $\epsilon > 0$.
Assume that $p_N \in \calP_N((-\infty, -\epsilon])$ is a sequence of polynomials whose empirical root distributions $\meas{p_N}$ converge weakly to a probability measure $\mu \in \calM((-\infty, -\epsilon])$, and set $\alpha:= G_\mu(0)$.
Then, for each $s \in (0, \alpha)$, we have
\[
  R_{p_N}^{(N)}(s) - R_{\meas{p_N}}(s) = O(N^{-1}).
\]
\end{theorem}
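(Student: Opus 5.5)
The plan is to express the FFF transform as a Laplace transform, evaluate the resulting integral by a quantitative Laplace method, and then differentiate in $s$. The $R$-transform of $\meas{p_N}$ appears as the leading term, and everything else is $O(N^{-1})$.

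\textbf{Step 1: Laplace representation.} Since $\int_0^\infty x^{N-k}e^{-sx}\,dx=(N-k)!\,s^{-(N-k+1)}$, expanding $p$ in the normalized coefficients gives
\[
  \widehat{p}(s)=\frac{s^{N+1}}{N!}\int_0^\infty p(x)e^{-sx}\,dx,\qquad s>0 .
\]
For $p=p_N\in\calP_N((-\infty,-\epsilon])$ we have $p_N(x)=\prod_i (x-\lambda_i)>0$ on $[0,\infty)$. Writing $\mu_N=\meas{p_N}$ and $\Phi_N(x)=\int\log(x-t)\,d\mu_N(t)$ (a logarithmic potential), this becomes
\[
  \widehat{p_N}(Ns)=\frac{(Ns)^{N+1}}{N!}\int_0^\infty e^{N(\Phi_N(x)-sx)}\,dx .
\]
Hence
\[
  R^{(N)}_{p_N}(s)=-\frac{N+1}{Ns}+\langle x\rangle_{N,s},
\]
where $\langle x\rangle_{N,s}$ is the mean of $x$ under the Gibbs density on $[0,\infty)$ proportional to $e^{N(\Phi_N(x)-sx)}$. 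This uses differentiation under the integral sign, which is justified since $\Phi_N(x)\le \log(x+C)$ plus a tail contribution and $s>0$. It therefore suffices to show
\[
  \langle x\rangle_{N,s}=x_N^*(s)+O(N^{-1}),
\]
where $x_N^*(s)$ maximizes $\Phi_N(x)-sx$. Indeed, $\Phi_N'(x)=G_{\mu_N}(x)$, so $x_N^*=G_{\mu_N}^{-1}(s)$. Then $x_N^*-1/s=R_{\mu_N}(s)$, and the remaining term $-1/(Ns)$ is also $O(N^{-1})$. This is the Legendre-transform picture: the exponent at its maximum is the Legendre transform of $\Phi_N$.

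\textbf{Step 2: uniform control of the maximizer.} On $[0,\infty)$, the function $x\mapsto 1/(x-t)$ is bounded by $\epsilon^{-1}$ and continuous in $t\le-\epsilon$. So $G_{\mu_N}\to G_\mu$ locally uniformly on $[0,\infty)$, together with derivatives. Moreover $G_\mu$ decreases strictly from $\alpha=G_\mu(0)$ to $0$. Hence for fixed $s\in(0,\alpha)$ and $N$ large, $x_N^*$ exists, is unique, and lies in a compact interval $[a,b]\subset(0,\infty)$ independent of $N$.

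The concavity $-\Phi_N''(x)=\int (x-t)^{-2}\,d\mu_N$ is bounded below on $[a,b]$ uniformly in $N$. This follows from tightness of $(\mu_N)$: a fixed fraction of the mass lies in a bounded set. All higher derivatives are bounded by $k!\,\epsilon^{-k}$ on $[0,\infty)$.

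\textbf{Step 3: quantitative Laplace principle (main obstacle).} With these uniform bounds, I would set $x=x_N^*+y/\sqrt N$ and Taylor expand to third order. The Gaussian term contributes no shift to the mean, and the cubic term gives a correction $c_N/N$ with $c_N$ bounded uniformly, coming from $\Phi_N'''/(\Phi_N'')^2$. The contributions away from $x_N^*$ are exponentially small, both near the boundary $x=0$ (since $x_N^*\ge a>0$) and from the tail $x\to\infty$. For the tail, strict concavity gives $\Phi_N(x)-sx\le \Phi_N(x_N^*)-sx_N^*-c\min(|x-x_N^*|,|x-x_N^*|^2)$, with $c$ uniform in $N$, which controls both the mass and the first moment of the tail.

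Making this expansion uniform in $N$ is the delicate part, since $\mu_N$ may have unbounded support and only tightness is available. I expect the uniform strict-concavity bound above to resolve this. Combining the three steps yields $R^{(N)}_{p_N}(s)-R_{\meas{p_N}}(s)=O(N^{-1})$.
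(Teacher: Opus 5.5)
Your proposal is correct. Step 1 coincides with the paper's starting point: Proposition~\ref{prop:Laplace-FFF} together with the formula $R^{(N)}_p(s)=\langle x\rangle_{N,s}-(1+\frac1N)\frac1s$ from the proof of Lemma~\ref{lem:R-expectation}. From there you take a genuinely different route. The paper never Taylor-expands. It integrates by parts once to absorb the $-\frac{1}{Ns}$ term, giving $R^{(N)}_p(s)-R_{\meas{p}}(s)=\frac1s\bigl(\langle xG_p(x)\rangle-sx_s\bigr)$. It then uses the exact identity $xG_p(x)-x_sG_p(x_s)=T_p(x)(x-x_s)$, with $0\le T_p\le s$ decreasing, and the pointwise sandwich $-\psi_s'(x)\le\sigma_s^2(x-x_s)\le-\frac{x}{x_s}\psi_s'(x)$. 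A second integration by parts then yields
\[
-\frac{\alpha}{sNx_s\sigma_s^2}\le R^{(N)}_p(s)-R_{\meas{p}}(s)\le\frac{1}{Nx_s\sigma_s^2}
\]
for \emph{every} $p$ and $N$, and the normalising integral $\int_0^\infty e^{N\psi_s}$ cancels. Weak convergence is used only to see that $x_s$, $\sigma_s^2$ and $G_{\meas{p_N}}(0)$ have finite positive limits.

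Your Laplace-method route instead needs uniform-in-$N$ control of the maximiser, the curvature, the third derivative, the boundary $x=0$, the tail, and a lower bound $\int_0^\infty e^{N\psi}\gtrsim N^{-1/2}$ on the partition function. The inputs you list are all available and do close the argument:
\begin{itemize}
\item tightness gives $\int(x-t)^{-2}\,d\mu_N(t)\ge\frac12(x+K)^{-2}$;
\item $|\Phi_N'''|\le 2\epsilon^{-3}$;
\item $x_N^*\ge a>0$, so a symmetric window fits;
\item $\psi'\le G_{\mu_N}(x_N^*+\delta)-s\le -c$ beyond $x_N^*+\delta$ handles the tail;
\item $\psi''\ge-\epsilon^{-2}$ gives the partition-function lower bound.
\end{itemize}
In fact, for a mere $O(N^{-1})$ bound you do not need the cubic term at all. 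Since $|e^{NR}-1|\le N|R|e^{N|R|}$ with $|R(u)|\le C|u|^3$, the numerator is already $O(N^{-3/2})$.

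What your approach buys is the leading coefficient:
\[
R^{(N)}_{p_N}(s)-R_{\meas{p_N}}(s)=\frac1N\Bigl(\frac{\Phi_N'''(x_N^*)}{2\Phi_N''(x_N^*)^2}-\frac1s\Bigr)+o(N^{-1}).
\]
This coefficient is nonnegative by Cauchy--Schwarz and vanishes for a point mass, matching the exponentially small error in Example~\ref{ex:lateruse}. What the paper's approach buys is an elementary, explicit, non-asymptotic two-sided bound with no uniformity issues to manage.
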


As an application, we recover the convergence of finite free additive convolution to free additive convolution; see Corollary~\ref{cor:finitefreeadditive}.

The paper is organized as follows.
In Section~2, we introduce the necessary notation and present the analytic framework underlying the finite $R$-transform.
In Section~3, we prove the quantitative convergence theorem and discuss several applications.

\section{Preliminaries}

In this section, we review the necessary background and develop the analytic framework underlying the finite $R$-transform.

\subsection{\texorpdfstring{Voiculescu $R$-transform}{Voiculescu R-transform}}

For a probability measure $\mu \in \calM(\R)$, the Cauchy transform is defined by $G_\mu(z) = \int_{\R} \frac{1}{z-t}\,d\mu(t),$ for $z \in \C \setminus \supp(\mu)$,
and the $R$-transform is given by $R_\mu(z) = G_\mu^{-1}(z) - \frac{1}{z}$ for $z$ near $0$.
A basic difficulty in working with the $R$-transform is that it is essentially defined as the inverse of the Cauchy transform.
For our purposes, it is more convenient to reformulate this in terms of logarithmic potentials and Legendre duality.

Throughout this paper, unless otherwise stated, we assume that $\mu$ is supported on the non-positive real line.

\begin{definition}[Logarithmic potential]
  Let $\mu \in \calM((-\infty,0])$.
  Assume that $\int_{-\infty}^0 \abs{\log(z-t)}\,d\mu(t) < \infty$ for all $z>0$.
  The logarithmic potential of $\mu$ is defined by
  \[
    H_\mu(z) = \int_{-\infty}^0 \log(z-t)\,d\mu(t),
    \qquad z>0.
  \]
\end{definition}

We next recall a simple variant of the Legendre transform adapted to concave functions.
Although the classical Legendre transform is usually formulated for convex functions,
the following concave version is sufficient for our purpose.

\begin{definition}[Concave Legendre transform]
Let $h$ be a differentiable strictly concave function on $(0,\infty)$.
Its (concave) Legendre transform is defined by
\[
  h^*(s) := \inf_{x>0} \{ sx - h(x) \}.
\]
\end{definition}

The following property is the analogue of the usual Legendre duality.

\begin{proposition}
Let $h$ be differentiable and strictly concave on $(0,\infty)$.
Then
\[
  h^*(h'(x)) = xh'(x) - h(x),
\]
and
\[
  (h^*)'(h'(x)) = x.
\]
In other words, the derivative of $h^*$ is the inverse function of $h'$.\end{proposition}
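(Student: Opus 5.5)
The plan is to evaluate the infimum defining $h^*$ at the slope $s=h'(x)$ directly, and then to identify $h^*$ as a function that can be differentiated. Fix $x_0>0$ and put $s_0:=h'(x_0)$. Consider $\phi(y):=s_0y-h(y)$ on $(0,\infty)$. Since $h$ is strictly concave, $-h$ is strictly convex, so $\phi$ is strictly convex and differentiable with $\phi'(y)=s_0-h'(y)$. This derivative vanishes at $y=x_0$.

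For a differentiable convex function, a critical point is a global minimizer. Concretely, the tangent-line inequality for the concave function $h$ gives $h(y)\le h(x_0)+h'(x_0)(y-x_0)$ for all $y>0$, which rearranges to $\phi(y)\ge\phi(x_0)$. Strict concavity makes the inequality strict for $y\neq x_0$. Hence the infimum is attained exactly at $y=x_0$, and
\[
h^*(h'(x_0))=x_0h'(x_0)-h(x_0).
\]
This proves the first identity.

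For the derivative identity, note that strict concavity makes $h'$ strictly decreasing. So $h'$ is a bijection from $(0,\infty)$ onto its image $I$, which is an interval by Darboux's theorem, with inverse $g:=(h')^{-1}\colon I\to(0,\infty)$. By the first part, $h^*(s)=sg(s)-h(g(s))$ for $s\in I$.

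I will avoid assuming $g$ is differentiable, since $h'$ might have zero derivative at some points. Instead I will compute the derivative of $h^*$ directly by a sandwich argument. For $s,t\in I$, using $g(s)$ as a competitor in the infimum defining $h^*(t)$ gives
\[
h^*(t)\le tg(s)-h(g(s))=h^*(s)+(t-s)g(s).
\]
Exchanging the roles of $s$ and $t$ gives
\[
h^*(t)\ge h^*(s)+(t-s)g(t).
\]
Hence $(h^*(t)-h^*(s))/(t-s)$ lies between $g(s)$ and $g(t)$. Since $g$ is monotone and its image $(0,\infty)$ is an interval, $g$ is continuous. Letting $t\to s$ then yields $(h^*)'(s)=g(s)$. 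Setting $s=h'(x)$ gives $(h^*)'(h'(x))=x$.

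The main obstacle is exactly this last point: getting differentiability of $h^*$ without assuming $h\in C^2$ with $h''<0$. The two-sided competitor inequality settles it, together with the continuity of the monotone surjection $g$. At interior points of $I$ the limit is two-sided. If $I$ has endpoints, they do not belong to $I$, because $h'$ is strictly decreasing on an open interval and so attains no maximum or minimum. So $I$ is open and no one-sided issues arise.
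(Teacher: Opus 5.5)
Your proof is correct and complete. The paper itself gives no proof of this proposition. It states it as the concave analogue of the usual Legendre duality and applies it only to $h=H_\mu$, which is smooth with $H_\mu''=G_\mu'<0$.

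In that smooth setting the natural argument is the chain rule. With $g=(h')^{-1}$ differentiable by the inverse function theorem, $h^*(s)=s\,g(s)-h(g(s))$ gives $(h^*)'(s)=g(s)+\bigl(s-h'(g(s))\bigr)g'(s)=g(s)$.

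Your route replaces this with an envelope argument. The two competitor inequalities show that $g(s)$ is a supergradient of the concave function $h^*$ at $s$, and they trap the difference quotient between $g(s)$ and $g(t)$. So you need only continuity of the monotone bijection $g$, not its differentiability. This proves the proposition under exactly its stated hypotheses, differentiability and strict concavity. The chain-rule version implicitly needs something like $h''<0$, since $g$ need not be differentiable otherwise. For example, take $h(x)=-(x-1)^4$. Then $g(s)=1-(s/4)^{1/3}$ is not differentiable at $s=0$, yet $h^*(s)=s-3(s/4)^{4/3}$ is, with $(h^*)'(0)=1=g(0)$.

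Your observation that the image $I$ is open is also the right way to make $(h^*)'$ meaningful on $I$, because $h^*$ may be finite or $-\infty$ off $I$.
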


We now apply this to the logarithmic potential.
Since
\[
  \frac{d}{dz} H_\mu(z) = G_\mu(z)
\]
and $G_\mu$ is strictly decreasing on $(0,\infty)$, the function $H_\mu$ is strictly concave on $(0,\infty)$.
Hence we may consider its Legendre transform
\[
  H_\mu^*(s) = \inf_{x>0} \{ sx - H_\mu(x) \}.
\]

Let $\alpha := \lim_{x\downarrow 0} G_\mu(x) \in (0,\infty].$
Since $G_\mu(+\infty)=0$, the function $H_\mu^*(s)$ is naturally defined for $s\in(0,\alpha)$.

\begin{corollary}
For $s\in(0,\alpha)$, we have
\[
  \frac{d}{ds} H_\mu^*(s)
  = G_\mu^{(-1)}(s)
  = R_\mu(s) + \frac{1}{s}.
\]
\end{corollary}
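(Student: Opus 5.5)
The corollary follows by combining the preceding Legendre-duality proposition with the identity $H_\mu'=G_\mu$. The plan is to write $s$ as $H_\mu'(x)=G_\mu(x)$ for a unique $x>0$ and then read off $(H_\mu^*)'(s)$ from that proposition.

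\textbf{Step 1: the hypotheses of the proposition hold.} We need $H_\mu$ to be differentiable and strictly concave on $(0,\infty)$.
\begin{itemize}
\item Differentiability comes from differentiating under the integral sign. For $z$ in a compact subset $[a,b]\subset(0,\infty)$ and $t\le 0$, the derivative of the integrand satisfies $|1/(z-t)|\le 1/a$. This bound is integrable against the probability measure $\mu$, so dominated convergence gives $H_\mu'(z)=G_\mu(z)$.
\item Strict concavity holds because $G_\mu'(z)=-\int (z-t)^{-2}\,d\mu(t)<0$.
\end{itemize}
We assume the integrability condition in the definition of $H_\mu$ is in force.

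\textbf{Step 2: the range of $G_\mu$.} $G_\mu$ is continuous and strictly decreasing on $(0,\infty)$. By monotone convergence its limit as $x\downarrow0$ is $\alpha$. By dominated convergence, $G_\mu(x)\to0$ as $x\to\infty$. Hence $G_\mu$ is a bijection from $(0,\infty)$ onto $(0,\alpha)$ with continuous inverse $G_\mu^{(-1)}$. So every $s\in(0,\alpha)$ can be written as $s=G_\mu(x)=H_\mu'(x)$ for a unique $x=G_\mu^{(-1)}(s)$.

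\textbf{Step 3: the infimum is attained.} For such $s$, the function $x\mapsto sx-H_\mu(x)$ is strictly convex with derivative $s-G_\mu(x)$. This derivative vanishes exactly at $x=G_\mu^{(-1)}(s)$, so the infimum is finite and attained there. This justifies the proposition's formula $H_\mu^*(H_\mu'(x))=xH_\mu'(x)-H_\mu(x)$ on all of $(0,\alpha)$.

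\textbf{Step 4: differentiability of $H_\mu^*$.} This is the only point needing care, since the proposition takes it for granted. I would prove it directly. Set $x(s)=G_\mu^{(-1)}(s)$. Then $x(s)$ is differentiable by the inverse function theorem, because $G_\mu'<0$. Differentiating $H_\mu^*(s)=s\,x(s)-H_\mu(x(s))$ gives
\[
(H_\mu^*)'(s)=x(s)+s\,x'(s)-G_\mu(x(s))\,x'(s)=x(s).
\]
The last two terms cancel since $G_\mu(x(s))=s$. This is the envelope theorem.

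\textbf{Step 5: conclusion.} Steps 2 and 4 give $(H_\mu^*)'(s)=G_\mu^{(-1)}(s)$. The definition $R_\mu(s)=G_\mu^{-1}(s)-1/s$ then yields $(H_\mu^*)'(s)=R_\mu(s)+1/s$.

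This last identification is a small subtlety. It takes the branch of $G_\mu^{-1}$ defining $R_\mu$ to be the inverse of $G_\mu$ restricted to $(0,\infty)$. That inverse extends analytically to a neighbourhood of $0$ in the usual way, since $G_\mu$ is univalent near infinity. The identity therefore holds on $(0,\alpha)$, understood via this real inverse.
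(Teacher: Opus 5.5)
Your proof is correct and follows the same route as the paper: the Legendre duality proposition is applied to $h=H_\mu$, with $h'=G_\mu$ a strictly decreasing bijection $(0,\infty)\to(0,\alpha)$, giving $(H_\mu^*)'=G_\mu^{-1}$, and your Steps 1--4 (differentiation under the integral, the range of $G_\mu$, attainment of the infimum, and the envelope computation) make explicit what the paper leaves implicit. One small wording fix in Step 5: it is $G_\mu^{-1}(s)-1/s$ that extends analytically across $0$, not the inverse itself, which has a simple pole at $0$.
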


\subsection{\texorpdfstring{Finite $R$-transform and Laplace transform}{Finite R-transform and Laplace transform}}

For $p(x)=\sum_{k=0}^N x^{N-k}(-1)^k\binom{N}{k}\coef_k^{(N)}(p)\in\calP_N(\C)$, the finite free Fourier (FFF) transform is
\[
  \widehat{p}(s)
  = \sum_{k=0}^N \frac{(-1)^k\coef_k^{(N)}(p)}{k!}s^k.
\]
Then, $\widehat{p\boxplus_N q}(s)\overset{N}{=}\widehat{p}(s)\widehat{q}(s)$ for $p,q\in\calP_N(\C)$.
This motivates the logarithmic FFF transform
\[
  C_p^{(N)}(s) := \log \widehat{p}(s),
\]
which is well defined near the origin since $\widehat{p}(0)=1$.
Then
\begin{equation}\label{eq:C-linearization}
  C_{p\boxplus_N q}^{(N)}(s)\overset{N}{=}C_p^{(N)}(s)+C_q^{(N)}(s).
\end{equation}

Expanding
\[
  C_p^{(N)}(s)=\sum_{n=1}^\infty \frac{(-1)^n c_n^{(N)}(p)}{n!}s^n,
\]
we have the following moment-cumulant formula
\[
    \coef_n(p) = \sum_{\pi \in P(n)} c_\pi^{(N)}(p), \qquad n \in \N
\]
and equivalently
\begin{equation} \label{eq:c_nExpansion}
  c_n^{(N)}(p) = \sum_{\pi \in P(n)} \coef_\pi^{(N)}(p) \mu_n(\pi, 1_n), \qquad n \in \N
\end{equation}
where $P(n)$ denotes the set partitions of $[n] = \{1, \dots, n\}$ and $\mu_n$ the M\"obius function of $P(n)$; see \cite{AP, NS}.
In particular, Eq. \eqref{eq:C-linearization} implies
\begin{equation} \label{eq:c_nLinear}
  c_n^{(N)}(p \boxplus_N q) = c_n^{(N)}(p) + c_n^{(N)}(q)
\end{equation}
for $n=1, \dots, N$.

However, the function $C_p^{(N)}(s)$ does not converge to $R_\mu$ even when $\meas{p_N}\weakto\mu$; the reason for this will become clear later.
We therefore introduce the following finite $R$-transform instead.

\begin{definition}[Finite $R$-transform and finite free cumulants]
  Let $p\in\calP_N(\C)$.
  The finite $R$-transform of $p$ is defined by
  \[
    R_p^{(N)}(s)
    := -\frac{1}{N}\frac{d}{ds}C_p^{(N)}(Ns)
    = -\frac{1}{N}\frac{d}{ds}\log \widehat{p}(Ns) = - \frac{\widehat{p}'(Ns)}{\widehat{p}(Ns)}
  \]
We also define the finite free cumulants of $p$ as the coefficients of the expansion
\[
  R_p^{(N)}(s)
  = \sum_{n=0}^{\infty} \kappa_{n+1}^{(N)}(p)s^n.
\]
\end{definition}

From the definition and Eq. \eqref{eq:c_nExpansion}, the finite free cumulants satisfy
\[
\kappa_n^{(N)}(p)=\frac{(-N)^{n-1}}{(n-1)!}c_n^{(N)}(p) = \frac{(-N)^{n-1}}{(n-1)!}\sum_{\pi \in P(n)} \coef_\pi^{(N)}(p) \mu_n(\pi, 1_n), \qquad n \in \N.
\]

By Eq. \eqref{eq:C-linearization}, the finite $R$-transform linearizes finite free convolution, at least up to order $N-1$:
\begin{equation} \label{eq:linearityOfR}
  R_{p\boxplus_N q}^{(N)}(s)\overset{N-1}{=}R_p^{(N)}(s)+R_q^{(N)}(s);
\end{equation}
in other words, one has the linearization property of finite free cumulants
\begin{equation*}
  \kappa_n^{(N)}(p \boxplus_N q) = \kappa_n^{(N)}(p) + \kappa_n^{(N)}(q)
\end{equation*}
for $n=1, \dots, N$ by Eq. \eqref{eq:c_nLinear}.

We do not pursue the combinatorial theory of finite free cumulants any further and refer the interested reader to \cite{AP} for details.
Instead, we turn to the main analytic question: whether the finite $R$-transform converges to the usual $R$-transform in the large-degree limit. 
This problem forms the central theme of the present paper.
Our approach is to give an analytic interpretation of $R_p^{(N)}$. 
More precisely, we show that the FFF transform is closely related to the Laplace transform of the polynomial $p$, which leads to a natural integral representation of $R_p^{(N)}$. 
This representation will serve as the starting point of our analysis.

From now on, we assume that
\[
  p(x)=\prod_{i=1}^N (x+\lambda_i)\in\calP_N(\R_{<0}),
  \qquad \lambda_i>0.
\]

\begin{definition}[Laplace transform]
  Let $f:[0,\infty)\to\R$ be a measurable function.
  For $s>0$, the Laplace transform of $f$ is defined by
  \[
    \Lap{f}(s)=\int_0^\infty f(x)e^{-sx}\,dx,
  \]
  whenever the integral converges.
\end{definition}

The key observation is that the FFF transform coincides with the Laplace transform of $p$, up to a multiplicative factor.

\begin{proposition}\label{prop:Laplace-FFF}
  Let
  \[
    p(x)=\sum_{k=0}^N x^{N-k}(-1)^k\binom{N}{k}\coef_k(p).
  \]
  Then
  \[
    \widehat{p}(s)=\frac{s^{N+1}}{N!}\Lap{p}(s).
  \]
  Consequently,
  \[
    \log \widehat{p}(s)=\log \Lap{p}(s)+(N+1)\log s-\log N!,
  \]
  and therefore
  \[
    R_p^{(N)}(s)
    =-\frac{1}{N}\frac{d}{ds}\log \Lap{p}(Ns)-\frac{N+1}{Ns}.
  \]
\end{proposition}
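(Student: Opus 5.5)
The plan is to compute $\Lap{p}(s)$ term by term from the coefficient expansion of $p$ and compare the result with the definition of $\widehat{p}$. By linearity, using $\Lap{x^{m}}(s)=m!/s^{m+1}$ for $s>0$ (the elementary Gamma integral $\int_0^\infty x^m e^{-sx}\,dx$), we get
\[
  \Lap{p}(s)=\sum_{k=0}^N (-1)^k\binom{N}{k}\coef_k(p)\,\frac{(N-k)!}{s^{N-k+1}} .
\]
Since $\binom{N}{k}(N-k)!=N!/k!$, multiplying by $s^{N+1}/N!$ gives
\[
  \frac{s^{N+1}}{N!}\Lap{p}(s)=\sum_{k=0}^N \frac{(-1)^k\coef_k(p)}{k!}\,s^k=\widehat{p}(s),
\]
which is the first identity. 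The integrals converge for every $s>0$ because $p$ is a polynomial.

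For the logarithmic identity, note that under the standing assumption $p\in\calP_N(\R_{<0})$ the polynomial $p$ is positive on $[0,\infty)$. Hence $\Lap{p}(s)>0$ for $s>0$, and so $\widehat{p}(s)>0$ there as well. All logarithms are therefore real, and taking $\log$ of the product gives $\log\widehat{p}(s)=\log\Lap{p}(s)+(N+1)\log s-\log N!$ on $(0,\infty)$.

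To make this agree with the branch of $\log\widehat{p}$ fixed near $0$ by $\widehat{p}(0)=1$, observe that $\widehat{p}$ is a real polynomial that is positive on $[0,\infty)$. The real logarithm on $(0,\infty)$ then extends continuously to $s=0$ with value $0$, so it is the same branch.

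Finally, substitute $Ns$ for $s$ and differentiate in $s$. Using $\frac{d}{ds}\log\Lap{p}(Ns)$ together with $\frac{d}{ds}(N+1)\log(Ns)=\frac{N+1}{s}$, and multiplying by $-\frac1N$, we obtain
\[
  R_p^{(N)}(s)=-\frac1N\frac{d}{ds}\log\Lap{p}(Ns)-\frac{N+1}{Ns}.
\]
There is no real obstacle in this argument. The only points needing care are the binomial–factorial bookkeeping in the first step and the remark about the branch and positivity, which ensures that the logarithmic identities hold as genuine identities of functions for $s>0$.
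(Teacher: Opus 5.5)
Your proposal is correct and follows the paper's proof essentially verbatim: you compute the Laplace transform term by term via $\Lap{x^{m}}(s)=m!/s^{m+1}$, use $\binom{N}{k}(N-k)!=N!/k!$ after multiplying by $s^{N+1}/N!$, then take logarithms and differentiate at $Ns$. Your remarks on the positivity of $\widehat{p}$ on $[0,\infty)$ and on the choice of branch only make explicit what the paper covers with ``the rest follows readily.''
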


\begin{proof}
  Since
  \[
    \Lap{x^k}(s)=\frac{k!}{s^{k+1}},
  \]
  we have
  \[
    \Lap{p}(s)
    =\sum_{k=0}^N (-1)^k\binom{N}{k}\coef_k(p) \frac{(N-k)!}{s^{N-k+1}}.
  \]
  Multiplying both sides by $s^{N+1}/N!$ yields
  \[
    \frac{s^{N+1}}{N!}\Lap{p}(s)
    =\sum_{k=0}^N \frac{(-1)^k\coef_k(p)}{k!}s^k
    =\widehat{p}(s),
  \]
  as claimed. The rest follows readily.
\end{proof}

\subsection{Integral representation and the Laplace principle}

Let $H_p$ and $G_p$ denote the logarithmic potential and the Cauchy transform of $\meas{p}$:
\[
  H_p(x)=\frac{1}{N}\log p(x)
  =\frac{1}{N}\sum_{i=1}^N \log(x+\lambda_i),
\]
and
\[
  G_p(x)=H_p'(x)
  =\frac{1}{N}\sum_{i=1}^N \frac{1}{x+\lambda_i}
  =\frac{p'(x)}{Np(x)}.
\]

Let
\[
  \alpha:=G_p(0)\in(0,\infty).
\]
For $s\in(0,\alpha)$, define
\[
  H_p^*(s):=\inf_{x>0}\{sx-H_p(x)\},
  \qquad
  L_p^{(\infty)}(s):=-H_p^*(s) = \sup_{x>0}\{-sx+H_p(x)\}.
\]
Then
\[
  \frac{d}{ds}H_p^*(s)=G_p^{-1}(s)=R_p^{(\infty)}(s)+\frac1s,
\]
where $R_p^{(\infty)}(s):=R_{\meas{p}}(s)$ denotes the Voiculescu $R$-transform of the empirical root distribution of $p$.

On the other hand, motivated by Proposition \ref{prop:Laplace-FFF}, define
\[
  L_p^{(N)}(s):=\frac1N\log \Lap{p}(Ns).
\]
To compare $L_p^{(N)}$ with $L_p^{(\infty)}$, fix $s\in(0,\alpha)$ and let $x_s>0$ be the unique point such that
\[
  G_p(x_s)=s.
\]
Set
\[
  \phi_s(x):=-sx+H_p(x),
  \qquad
  \psi_s(x):=\phi_s(x)-\phi_s(x_s).
\]
Since
\[
  \phi_s'(x)=-s+G_p(x),
  \qquad
  \phi_s''(x)=G_p'(x)<0,
\]
the function $\phi_s$ is strictly concave and attains its maximum at $x_s$.
Hence
\[
  L_p^{(\infty)}(s)=\phi_s(x_s)
\]
and $\psi_s(x) \le 0$.

\begin{proposition}\label{prop:Laplace-correction}
  For every $s\in(0,\alpha)$,
  \[
    L_p^{(N)}(s)
    =L_p^{(\infty)}(s)+\frac1N\log\int_0^\infty e^{N\psi_s(x)}\,dx.
  \]
\end{proposition}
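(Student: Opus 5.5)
The identity is an exact rewriting of the Laplace integral, so the plan is to compute directly from the definitions. Since $p(x)=\prod_{i=1}^N(x+\lambda_i)>0$ on $[0,\infty)$ (all $\lambda_i>0$), we may write $p(x)=e^{N H_p(x)}$ for $x\ge 0$. This uses $H_p(x)=\frac1N\log p(x)$, which extends continuously to $x=0$ because $p(0)=\prod\lambda_i>0$. Substituting into the definition of the Laplace transform at the point $Ns$ gives
\[
  \Lap{p}(Ns)=\int_0^\infty e^{NH_p(x)}e^{-Nsx}\,dx=\int_0^\infty e^{N\phi_s(x)}\,dx .
\]
The integral converges for every $s>0$, since $p$ is a polynomial and $e^{-Nsx}$ decays exponentially.

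Next, factor out the maximal value. Using $\phi_s=\psi_s+\phi_s(x_s)$, we get
\[
  \Lap{p}(Ns)=e^{N\phi_s(x_s)}\int_0^\infty e^{N\psi_s(x)}\,dx .
\]
Taking $\frac1N\log$ of both sides, the left side is $L_p^{(N)}(s)$ by definition. On the right, $\frac1N\log e^{N\phi_s(x_s)}=\phi_s(x_s)$.

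Finally, identify $\phi_s(x_s)$ with $L_p^{(\infty)}(s)$, as noted before the statement. For $s\in(0,\alpha)$, the function $G_p$ is continuous and strictly decreasing from $G_p(0)=\alpha$ down to $0$ at infinity, so there is a unique $x_s>0$ with $G_p(x_s)=s$. Because $\phi_s$ is strictly concave with $\phi_s'(x_s)=0$, we get $\sup_{x>0}\phi_s(x)=\phi_s(x_s)=L_p^{(\infty)}(s)$. This gives the stated formula.

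There is no real obstacle. The only points needing care are the positivity of $p$ on $[0,\infty)$, which justifies taking logarithms, and the fact that the integral of $e^{N\psi_s}$ is finite and positive, so that its logarithm is defined. Both follow immediately from $\lambda_i>0$ and $s>0$.
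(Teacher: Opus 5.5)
Your proposal is correct and follows the paper's proof exactly: you write $p(x)e^{-Nsx}=e^{N\phi_s(x)}$, factor out $e^{N\phi_s(x_s)}$, take $\frac1N\log$, and use $L_p^{(\infty)}(s)=\phi_s(x_s)$, which the paper also establishes before the statement. Your additional remarks on the positivity of $p$ on $[0,\infty)$ and on the finiteness of the integral are harmless justifications that the paper leaves implicit.
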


\begin{proof}
  Since
  \[
    p(x)e^{-Nsx}=\exp\bigl(N\phi_s(x)\bigr),
  \]
  we have
  \[
  \begin{aligned}
    L_p^{(N)}(s)
    &=\frac1N\log\int_0^\infty e^{N\phi_s(x)}\,dx \\
    &=\frac1N\log\left(
      e^{N\phi_s(x_s)}
      \int_0^\infty e^{N(\phi_s(x)-\phi_s(x_s))}\,dx
    \right) \\
    &=\phi_s(x_s)+\frac1N\log\int_0^\infty e^{N\psi_s(x)}\,dx.
  \end{aligned}
  \]
\end{proof}

Thus the comparison between $L_p^{(N)}$ and $L_p^{(\infty)}$ reduces to estimating the integral
\[
  \int_0^\infty e^{N\psi_s(x)}\,dx.
\]
This observation reflects the Laplace principle underlying our approach.
Roughly speaking,
\[
  \frac{1}{N}\log\int_0^\infty e^{N\psi_s(x)}\,dx \longrightarrow 0
\]
as $N\to\infty$.
A related statement also appears in \cite[Eq.~(72)]{MergnyPotters2022}.
The main issue here is to obtain a quantitative bound.

\begin{definition}
  For $s\in(0,\alpha)$, define a probability measure on $[0,\infty)$ by
  \[
    \nu_{p,s}^{(N)}(dx)
    :=\frac{p(x)e^{-Nsx}\,dx}{\int_0^\infty p(x)e^{-Nsx}\,dx}
    =\frac{e^{N\psi_s(x)}\,dx}{\int_0^\infty e^{N\psi_s(x)}\,dx}.
  \]
\end{definition}

\begin{lemma}\label{lem:R-expectation}
  For $s\in(0,\alpha)$,
  \[
    R_p^{(N)}(s)
    =\frac1s\int_0^\infty xG_p(x)\,\nu_{p,s}^{(N)}(dx)-\frac1s.
  \]
  In particular,
  \[
    R_p^{(N)}(s)-R_p^{(\infty)}(s)
    =\frac1s\left(
      \int_0^\infty xG_p(x)\,\nu_{p,s}^{(N)}(dx)-sx_s
    \right).
  \]
\end{lemma}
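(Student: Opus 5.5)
Starting from Proposition \ref{prop:Laplace-FFF}, we write the finite $R$-transform in terms of $L_p^{(N)}$:
\[
  R_p^{(N)}(s) = -\frac{d}{ds}L_p^{(N)}(s) - \frac{N+1}{Ns}.
\]
This holds because $\frac{d}{ds}\frac1N\log\Lap{p}(Ns) = \frac{d}{ds}L_p^{(N)}(s)$ directly from the definition $L_p^{(N)}(s)=\frac1N\log\Lap{p}(Ns)$. Next we compute the derivative of $L_p^{(N)}$ by differentiating under the integral sign. We have
\[
  \frac{d}{ds}L_p^{(N)}(s) = \frac1N\,\frac{\int_0^\infty (-Nx)p(x)e^{-Nsx}\,dx}{\int_0^\infty p(x)e^{-Nsx}\,dx} = -\int_0^\infty x\,\nu_{p,s}^{(N)}(dx).
\]
Differentiation under the integral is justified because $p$ has polynomial growth and $e^{-Nsx}$ decays exponentially, locally uniformly in $s>0$. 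This gives
\[
  R_p^{(N)}(s) = \int_0^\infty x\,\nu_{p,s}^{(N)}(dx) - \frac{1}{s} - \frac{1}{Ns}.
\]

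The main step is to convert $\int x\,d\nu$ into $\frac1s\int xG_p\,d\nu$ while absorbing the extra $\frac{1}{Ns}$ term. For this we integrate by parts. Note that $\frac{d}{dx}e^{N\phi_s(x)} = N(G_p(x)-s)e^{N\phi_s(x)}$. Multiplying by $x$ and integrating over $(0,\infty)$, the boundary terms vanish: at $0$ because of the factor $x$ (and $p(0)>0$ is finite), and at $\infty$ by exponential decay. Hence
\[
  \int_0^\infty xN(G_p(x)-s)e^{N\phi_s(x)}\,dx = -\int_0^\infty e^{N\phi_s(x)}\,dx.
\]
Dividing by $N$ times the normalizing integral gives
\[
  \int x G_p\,d\nu_{p,s}^{(N)} - s\int x\,d\nu_{p,s}^{(N)} = -\frac1N,
\]
that is,
\[
  \int x\,d\nu_{p,s}^{(N)} = \frac1s\int xG_p\,d\nu_{p,s}^{(N)} + \frac{1}{Ns}.
\]
Substituting this into the previous formula for $R_p^{(N)}(s)$, the $\frac{1}{Ns}$ terms cancel and we obtain the first claim.

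For the second claim we use $R_p^{(\infty)}(s) = G_p^{-1}(s) - \frac1s = x_s - \frac1s$. Subtracting this from the first identity gives
\[
  R_p^{(N)}(s)-R_p^{(\infty)}(s) = \frac1s\left(\int_0^\infty xG_p(x)\,\nu_{p,s}^{(N)}(dx) - s x_s\right).
\]
The only delicate point is the bookkeeping in the integration by parts, namely checking that the boundary terms vanish, and this is routine.
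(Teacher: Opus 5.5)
Your proof is correct and follows essentially the same route as the paper: you differentiate $L_p^{(N)}$ under the integral, then integrate by parts to convert $\int x\,d\nu_{p,s}^{(N)}$ into $\frac1s\int xG_p\,d\nu_{p,s}^{(N)}+\frac{1}{Ns}$, and the $\frac{1}{Ns}$ cancels the extra term. Your integration by parts against $\frac{d}{dx}e^{N\phi_s(x)}=(p'(x)-Nsp(x))e^{-Nsx}$ is exactly the paper's identity $Ns\int_0^\infty xp(x)e^{-Nsx}\,dx=\int_0^\infty p(x)e^{-Nsx}\,dx+\int_0^\infty xp'(x)e^{-Nsx}\,dx$, written in the language of $\nu_{p,s}^{(N)}$.
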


\begin{proof}
  By Proposition \ref{prop:Laplace-FFF},
  \[
    R_p^{(N)}(s)
    =-\frac{d}{ds}L_p^{(N)}(s)-\left(1+\frac1N\right)\frac1s.
  \]
  Since $p(x)$ is a polynomial and $e^{-Nsx}$ decays exponentially for $s>0$, both
  \[
  \int_0^\infty p(x)e^{-Nsx}\,dx
  \quad\text{and}\quad
  \int_0^\infty x p(x)e^{-Nsx}\,dx
  \]
  are finite.
Hence we may differentiate under the integral and obtain
\[
  \frac{d}{ds}L_p^{(N)}(s)
  =-\frac{\int_0^\infty xp(x)e^{-Nsx}\,dx}
          {\int_0^\infty p(x)e^{-Nsx}\,dx}.
\]
  Thus,
  \begin{equation*}
        R_p^{(N)}(s)
    =\frac{\int_0^\infty xp(x)e^{-Nsx}\,dx}
            {\int_0^\infty p(x)e^{-Nsx}\,dx}
     -\left(1+\frac1N\right)\frac1s.
  \end{equation*}
  Integration by parts yields
  \[
    \int_0^\infty xp(x)e^{-Nsx}\,dx
    =\frac1{Ns}\left(
      \int_0^\infty p(x)e^{-Nsx}\,dx
      +\int_0^\infty xp'(x)e^{-Nsx}\,dx
    \right),
  \]
  so
  \begin{equation*}
        R_p^{(N)}(s)
    =\frac{\int_0^\infty xp'(x)e^{-Nsx}\,dx}
            {Ns\int_0^\infty p(x)e^{-Nsx}\,dx}
     -\frac1s.
  \end{equation*}
  Since $p'(x)/(Np(x))=G_p(x)$, this becomes
  \[
    R_p^{(N)}(s)
    = \frac{\int_0^\infty xG_p(x)p(x)e^{-Nsx}\,dx}
           {s\int_0^\infty p(x)e^{-Nsx}\,dx}
     -\frac1s,
  \]
  which is the first identity.
  The second follows from the definition:
  \[
    R_p^{(\infty)}(s)= G_p^{-1}(s) - \frac1s = x_s-\frac1s.
  \]
\end{proof}

Heuristically, the probability measure $\nu_{p,s}^{(N)}$ concentrates near $x_s$ as $N\to\infty$, and hence one expects $\int_0^\infty xG_p(x)\,\nu_{p,s}^{(N)}(dx)\sim x_sG_p(x_s)=sx_s$. 
In the next section, we justify this quantitatively by elementary estimates.



\begin{example} \label{ex:lateruse}
  We now consider the simplest case corresponding to the delta measure.
  Let
  \[
    p(x)=(x+\lambda)^N=\sum_{k=0}^N\binom{N}{k}\lambda^k x^{N-k},
    \qquad \lambda>0 .
  \]
  Then $\meas{p}=\delta_{-\lambda}$.
  In this case
  \[
    \widehat{p}(s)
    =\sum_{k=0}^N\frac{\lambda^k}{k!}s^k
    =\frac{s^{N+1}}{N!}\Lap{p}(s).
  \]

  Moreover,
  \[
    G_{\delta_{-\lambda}}(x)=\frac{1}{x+\lambda},
    \qquad
    H_{\delta_{-\lambda}}(x)=\log(x+\lambda),
  \]
  and therefore
  \[
    R_{\delta_{-\lambda}}(s)
    =G_{\delta_{-\lambda}}^{(-1)}(s)-\frac{1}{s}
    =-\lambda,
    \qquad 0<s<\frac{1}{\lambda}.
  \]

  Since $x_s=-\lambda+1/s$, we have
  \[
    L_{\delta_{-\lambda}}(s)
    =-H_{\delta_{-\lambda}}^*(s)
    =\phi_s(x_s)
    =s\lambda-\log s-1,
  \]
  where $\phi_s(x)=-sx+\log(x+\lambda)$.

  On the other hand,
  \[
    L_p^{(N)}(s)
    =\frac{1}{N}\log\int_0^\infty (x+\lambda)^N e^{-Nsx}\,dx .
  \]
  By the change of variables $y=x+\lambda$ and $z=sy$, we obtain
  \[
    \int_0^\infty (x+\lambda)^N e^{-Nsx}\,dx
    =e^{Ns\lambda}\int_\lambda^\infty y^N e^{-Nsy}\,dy
    =\frac{e^{Ns\lambda}}{s^{N+1}}\int_{s\lambda}^\infty z^N e^{-Nz}\,dz .
  \]

  Since $0<s\lambda<1$, putting $w=z-1$ yields
  \[
    \int_{s\lambda}^\infty z^N e^{-Nz}\,dz
    =\frac{1}{e^N}\int_{s\lambda-1}^\infty (w+1)^N e^{-Nw}\,dw .
  \]
  Hence
  \[
    L_p^{(N)}(s)-L_{\delta_{-\lambda}}(s)
    =\frac{1}{N}\log\!\left(
      \frac{1}{s}
      \int_{s\lambda-1}^\infty (w+1)^N e^{-Nw}\,dw
    \right).
  \]

  Since $-1<s\lambda-1<0$, we have
  \begin{equation*} 
        \int_{s\lambda-1}^\infty (w+1)^N e^{-Nw}\,dw
    \le
    \int_{-1}^\infty (w+1)^N e^{-Nw}\,dw
    =
    \frac{N!e^N}{N^{N+1}}
    \le
    e^{\frac{1}{12N}}\sqrt{\frac{2\pi}{N}},
  \end{equation*}
  where the last inequality follows from Stirling's formula.
  Moreover,
  \[
    \int_{s\lambda-1}^\infty (w+1)^N e^{-Nw}\,dw
    \ge
    \int_0^\infty (w+1)^N e^{-Nw}\,dw
    =
    \frac{N!e^N}{N^{N+1}}
    \cdot
    \frac{\sum_{k=0}^N N^k/k!}{e^N}.
  \]

  Consequently,
  \[
    L_p^{(N)}(s)-L_{\delta_{-\lambda}}(s)
    =
    -\frac{1}{N}\log s
    +\frac{1}{N}\!\left(
      \frac12\log(2\pi)-\frac12\log N
    \right)
    +o(N^{-1}),
  \]
  and also
  \[
    R_p^{(N)}(s)-R_{\delta_{-\lambda}}(s)
    =
    \frac{(s\lambda)^{N+1}e^{-N(s\lambda-1)}}
    {Ns\int_{s\lambda-1}^\infty (w+1)^N e^{-Nw}\,dw}.
  \]
\end{example}

\begin{remark}
  More generally, if $\lambda\in\R$ and $p(x)=(x-\lambda)^N$, then the definition gives
  \[
    R_p^{(N)}(s)
    = \lambda
      \frac{\sum_{k=0}^{N-1} \frac{(-1)^k}{k!}(\lambda Ns)^k}
           {\sum_{k=0}^N \frac{(-1)^k}{k!}(\lambda Ns)^k}.
  \]
  Although this formula follows directly from the definition, we include the above calculation because it provides a useful preview of the estimates that will appear later and because the resulting bound will be used again in the sequel.
\end{remark}

\section{\texorpdfstring{Convergence of the finite $R$-transform}{Convergence of the finite R-transform}}

In this section, we prove the main result.
After that, we state some of the applications.

\subsection{\texorpdfstring{Proof of Theorem~\ref{thm:main}}{Proof of the main theorem}}

\begin{lemma}\label{lem:psi-two-sided}
  Let
  \[
    \sigma_s^2:=-G_p'(x_s)=\frac1N\sum_{i=1}^N (x_s+\lambda_i)^{-2}.
  \]
  Then
  \[
    \sqrt{\frac{\pi}{2N\sigma_s^2}}
    \le
    \int_0^\infty e^{N\psi_s(x)}\,dx
    \le
    p(x_s)^{1/N}e^{1/(12N)}\sqrt{\frac{2\pi}{N}}.
  \]
\end{lemma}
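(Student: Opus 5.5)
The plan is to bound the integrand $e^{N\psi_s(x)}=p(x)e^{-Nsx}/\bigl(p(x_s)e^{-Nsx_s}\bigr)$ pointwise from above and from below by explicit functions. The upper comparison function comes from Jensen's inequality and reduces to the Gamma-type integral already computed in Example~\ref{ex:lateruse}. The lower one is a Gaussian coming from a second-order Taylor bound on the half-line $[x_s,\infty)$.

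\emph{Upper bound.} Write
\[
N\psi_s(x)=\sum_{i=1}^N \log\frac{x+\lambda_i}{x_s+\lambda_i}-Ns(x-x_s).
\]
By concavity of $\log$ (Jensen applied to the uniform average over $i$),
\[
\frac1N\sum_{i=1}^N \log\frac{x+\lambda_i}{x_s+\lambda_i}
\le \log\Bigl(\frac1N\sum_{i=1}^N \frac{x+\lambda_i}{x_s+\lambda_i}\Bigr).
\]
The right-hand side equals $\log\bigl(1+(x-x_s)G_p(x_s)\bigr)=\log\bigl(1+s(x-x_s)\bigr)$, since $G_p(x_s)=s$. Hence
\[
e^{N\psi_s(x)}\le \bigl(1+s(x-x_s)\bigr)^N e^{-Ns(x-x_s)}.
\]
Next substitute $w=s(x-x_s)$. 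The range $x\ge 0$ maps into $w\ge -sx_s$, and $-sx_s>-1$ because $sx_s=x_sG_p(x_s)<1$. This gives
\[
\int_0^\infty e^{N\psi_s}\,dx\le \frac1s\int_{-1}^\infty (1+w)^Ne^{-Nw}\,dw\le \frac1s\, e^{1/(12N)}\sqrt{\frac{2\pi}{N}},
\]
using the Stirling estimate from Example~\ref{ex:lateruse}. Finally, $1/s=1/G_p(x_s)$ is the harmonic mean of the numbers $x_s+\lambda_i$. By the HM--GM inequality it is at most their geometric mean $p(x_s)^{1/N}$, which yields the stated upper bound.

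\emph{Lower bound.} Restrict the integral to $[x_s,\infty)$, where the integrand is positive. We have $\psi_s(x_s)=0$ and $\psi_s'(x_s)=0$. For $x\ge x_s$,
\[
\psi_s''(x)=G_p'(x)=-\frac1N\sum_{i=1}^N (x+\lambda_i)^{-2}\ge -\frac1N\sum_{i=1}^N (x_s+\lambda_i)^{-2}=-\sigma_s^2,
\]
because each term $(x+\lambda_i)^{-2}$ is decreasing in $x$. Taylor's theorem with this lower bound on the second derivative gives $\psi_s(x)\ge -\tfrac12\sigma_s^2(x-x_s)^2$ on $[x_s,\infty)$. Therefore
\[
\int_0^\infty e^{N\psi_s}\,dx\ge\int_0^\infty e^{-N\sigma_s^2t^2/2}\,dt=\sqrt{\frac{\pi}{2N\sigma_s^2}}.
\]

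Neither half is deep. The step needing the most care is the upper bound, for two reasons. First, one has to recognize that the Jensen comparison collapses exactly to the one-root case of Example~\ref{ex:lateruse}. Second, the leftover factor $1/s$ must be identified as a harmonic mean, so that it can be dominated by $p(x_s)^{1/N}$.
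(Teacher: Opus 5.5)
Your proof is correct. The lower bound is the paper's argument in different clothing. The paper writes $\psi_s(x)=-\tfrac12\sigma_s^2(x-x_s)^2+V(x-x_s)$ and uses $v(t)=\log(1+t)-t+t^2/2\ge 0$ for $t\ge 0$ root by root, which amounts to your bound $\psi_s''\ge-\sigma_s^2$ on $[x_s,\infty)$.

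The upper bound, however, takes a genuinely different route. The paper factors $e^{N\psi_s(x)}=p(x_s)^{-1}\prod_i (x+\lambda_i)e^{-(x-x_s)/(x_s+\lambda_i)}$ and applies H\"older's inequality with $N$ exponents equal to $N$. Each factor then becomes the one-root integral of Example~\ref{ex:lateruse} at its own parameter $s_i=(x_s+\lambda_i)^{-1}$. The exponentials cancel exactly, and the product of the factors $(x_s+\lambda_i)^{(N+1)/N}$ yields the geometric mean $p(x_s)^{1/N}$ directly; this is why the lemma is stated in that form.

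You instead apply AM--GM pointwise to the integrand. This collapses all roots into a single one-root profile at the common parameter $s$, with equality when all roots coincide. You pass from the harmonic mean $1/s$ to the geometric mean only at the very end. Your route is more elementary and strictly sharper: you actually prove $\int_0^\infty e^{N\psi_s(x)}\,dx\le s^{-1}e^{1/(12N)}\sqrt{2\pi/N}$, where the roots enter only through $s$.

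This matters downstream. In Proposition~\ref{prop:L}, your version gives the upper half of the $O(\log N/N)$ estimate with constants depending only on $s$. The geometric-mean form leaves the extra term $\frac1N H_{p_N}(x_s)$, which need not be $O(\log N/N)$ under weak convergence alone, e.g.\ if a single root of $p_N$ sits at $-e^{N^2}$.
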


\begin{proof}
We first prove the lower bound.
Since
\[
\begin{aligned}
  \psi_s(x)
  &= -s(x-x_s)+H_p(x)-H_p(x_s) \\
  &= \frac{1}{N}\sum_{i=1}^N
    \left(
      \log\left(1+\frac{x-x_s}{x_s+\lambda_i}\right)
      - \frac{x-x_s}{x_s+\lambda_i}
    \right),
\end{aligned}
\]
let $v(t):=\log(1+t)-t+\frac{t^2}{2}$ for $t>-1$,
and define $V(x-x_s)
  := \frac{1}{N}\sum_{i=1}^N
  v\!\left(\frac{x-x_s}{x_s+\lambda_i}\right)$ for $x>0$.
Then
\[
  \psi_s(x)
  = -\frac{1}{2}\sigma_s^2(x-x_s)^2 + V(x-x_s).
\]

Since $v(t)\ge 0$ for $t\ge 0$, we obtain $\psi_s(x)\ge -\frac{1}{2}\sigma_s^2(x-x_s)^2$ for $x \ge x_s$.
Hence
\[
\begin{aligned}
  \int_0^\infty e^{N\psi_s(x)}\,dx
  &\ge \int_{x_s}^\infty e^{N\psi_s(x)}\,dx \\
  &\ge \int_{x_s}^\infty e^{-N\sigma_s^2(x-x_s)^2/2}\,dx \\
  &= \int_0^\infty e^{-N\sigma_s^2x^2/2}\,dx \\
  &= \sqrt{\frac{\pi}{2N\sigma_s^2}}.
\end{aligned}
\]

For the upper bound, 
\[
\begin{aligned}
  e^{N\psi_s(x)}
  &= \exp\left(
       \sum_{i=1}^N
       \left(
         \log\left(1+\frac{x-x_s}{x_s+\lambda_i}\right)
         - \frac{x-x_s}{x_s+\lambda_i}
       \right)
     \right) \\
  &= \prod_{i=1}^N
     \left(1+\frac{x-x_s}{x_s+\lambda_i}\right)
     \exp\left(-\frac{x-x_s}{x_s+\lambda_i}\right) \\
  &= \frac{1}{p(x_s)}
     \prod_{i=1}^N
     (x+\lambda_i)\exp\left(-\frac{x-x_s}{x_s+\lambda_i}\right).
\end{aligned}
\]
Therefore, by Hölder's inequality,
\[
  \int_0^\infty e^{N\psi_s(x)}\,dx
  \le
  \frac{1}{p(x_s)}
  \prod_{i=1}^N
  \left(
    \int_0^\infty
    (x+\lambda_i)^N
    \exp\left(
      -N\frac{x-x_s}{x_s+\lambda_i}
    \right)\,dx
  \right)^{1/N}.
\]

  By the calculation in Example \ref{ex:lateruse}, for $\lambda>0$ and $0 < s < \lambda^{-1}$, we have 
  \[
  \int_0^\infty \frac{(x+\lambda)^N}{e^{Nsx}} dx = \frac{e^{Ns\lambda}}{s^{N+1}}  \int_{s\lambda}^{\infty} \frac{z^N}{e^{Nz}} dz \le \frac{e^{Ns\lambda}}{s^{N+1}}  \int_{0}^{\infty} \frac{z^N}{e^{Nz}} dz \le \frac{e^{N(s\lambda-1)}e^{\frac{1}{12N}}}{s^{N+1}}\sqrt{\frac{2\pi}{N}}.
  \]
  Applying this with $\lambda= \lambda_{i}$ and $s =(x_s + \lambda_i)^{-1}$ for each $i=1, \dots, N$, we have
  \[
  \int_0^\infty e^{N\psi_s(x)} dx \le p(x_s)^{\frac{1}{N}}e^{\frac{1}{12N}}\sqrt{\frac{2\pi}{N}}.
  \]
  This completes the proof.
\end{proof}

\begin{proposition}\label{prop:L}
Let $\epsilon > 0$. Assume that $p_N \in \calP_N((-\infty, -\epsilon])$ is a sequence of polynomials whose empirical root distributions $\meas{p_N}$ converge weakly to a probability measure $\mu \in \calM((-\infty, -\epsilon])$.
Then, for each $s\in(0,\alpha)$, where $\alpha:=G_\mu(0)$, we have
\[
  L_{p_N}^{(N)}(s)-L_{p_N}^{(\infty)}(s)
  =O\!\left(\frac{\log N}{N}\right).
\]
\end{proposition}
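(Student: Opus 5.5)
By Proposition~\ref{prop:Laplace-correction}, it suffices to show that
\[
  \frac1N\log\int_0^\infty e^{N\psi_s(x)}\,dx = O\!\left(\frac{\log N}{N}\right),
\]
where $\psi_s$, $x_s$, $\sigma_s^2$ are those associated with $p=p_N$. Lemma~\ref{lem:psi-two-sided} sandwiches this integral, so taking $\frac1N\log$ gives
\[
  -\frac{1}{2N}\log\frac{2N\sigma_s^2}{\pi}
  \le \frac1N\log\int_0^\infty e^{N\psi_s(x)}\,dx
  \le \frac{1}{N^2}\log p_N(x_s)+\frac{1}{12N^2}+\frac{1}{2N}\log\frac{2\pi}{N}.
\]
The whole proof reduces to controlling $\sigma_s^2$ from above (and away from $0$ is not even needed) and $\frac1N\log p_N(x_s)=H_{p_N}(x_s)$ from above, uniformly in $N$. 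Here $x_s=x_s^{(N)}$ solves $G_{p_N}(x_s)=s$.

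First I would check that $s\in(0,\alpha)$ is admissible for $p_N$ for large $N$, and that $x_s^{(N)}$ stays in a compact subset of $(0,\infty)$. Since all roots lie in $(-\infty,-\epsilon]$, the functions $t\mapsto (x-t)^{-1}$ are bounded and continuous on the support for every $x\ge 0$. Weak convergence then gives $G_{p_N}(x)\to G_\mu(x)$ for each $x\ge0$, in particular $G_{p_N}(0)\to\alpha$. The convergence is uniform on $[0,\infty)$, since the family $\{(x-t)^{-1}\}_{x\ge0}$ is equicontinuous and uniformly bounded by $\epsilon^{-1}$ on $(-\infty,-\epsilon]$, and decays like $x^{-1}$.

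Fix $s<s'<\alpha$. Choose $x_0>0$ small with $G_\mu(x_0)>s'$, and $X$ large with $G_\mu(X)<s/2$ (possible since $G_\mu(X)\le 1/X$). For $N$ large we then have $G_{p_N}(x_0)>s$ and $G_{p_N}(X)<s$. By monotonicity of $G_{p_N}$, this gives $x_s^{(N)}\in[x_0,X]$.

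With this localization, the two bounds follow.
\begin{itemize}
\item $\sigma_s^2=\frac1N\sum_i (x_s+\lambda_i)^{-2}\le \epsilon^{-2}$, since $\lambda_i\ge\epsilon$.
\item $H_{p_N}(x_s)\le H_{p_N}(X)=\int\log(X-t)\,d\meas{p_N}(t)$. This needs an upper bound uniform in $N$.
\end{itemize}

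The second point is the main obstacle. Weak convergence alone does not bound $\int\log(X-t)\,d\meas{p_N}$, because a few very negative roots could make it large, even of order $\log N$ or bigger. However, it only enters multiplied by $1/N$, so I only need $H_{p_N}(x_s)=o(\log N)$, or even $O(\log N)$. I would try to get this from the defining equation: $s=G_{p_N}(x_s)$ together with concavity of $\log$ (Jensen) gives
\[
  H_{p_N}(x_s)=\frac1N\sum_i\log(x_s+\lambda_i)\le \log\Bigl(\frac1N\sum_i (x_s+\lambda_i)\Bigr).
\]
This is still not controlled without a first-moment assumption. If that fails, I would use the crude bound available from the upper estimate instead. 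Bounding the integral by $\int_0^\infty\prod_i\frac{x+\lambda_i}{x_s+\lambda_i}e^{-N s(x-x_s)}dx$ and using $\frac{x+\lambda}{x_s+\lambda}\le \max(1,x/x_s)$ (valid since $\lambda\ge 0$) gives a bound in which the $\lambda_i$ disappear entirely:
\[
  \int_0^\infty e^{N\psi_s(x)}\,dx\le x_s+ e^{Nsx_s}x_s^{-N}\int_{x_s}^\infty x^Ne^{-Nsx}dx.
\]
This is computable via Stirling, and it yields an $O(\log N/N)$ upper bound for $\frac1N\log$ using only $x_s\in[x_0,X]$. The resulting term $\frac1N\log$ of a $\Gamma$-integral may even be $O(1)$ after the $\frac1N$, so I would carefully track the $N$-dependence here.

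The lower bound needs nothing further: it gives $-\frac{1}{2N}\log(2N\epsilon^{-2}/\pi)=O(\log N/N)$. Combining the two bounds proves the proposition.
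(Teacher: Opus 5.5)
Your reduction via Proposition~\ref{prop:Laplace-correction}, the localization $x_s^{(N)}\in[x_0,X]$, and the lower bound are correct. The lower bound indeed needs only $\sigma_s^2\le\epsilon^{-2}$. All of this agrees with the paper, whose proof simply invokes the two-sided estimate of Lemma~\ref{lem:psi-two-sided}.

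Your concern about the upper bound is also legitimate. That lemma gives $\frac1N\log\int_0^\infty e^{N\psi_s}\,dx\le \frac1N H_{p_N}(x_s)+O(N^{-2})-\frac{1}{2N}\log\frac{N}{2\pi}$. This is $O(\log N/N)$ only if $H_{p_N}(x_s)=O(\log N)$, and weak convergence on $(-\infty,-\epsilon]$ does not guarantee that: for $p_N(x)=(x+1)^{N-1}(x+e^{N^2})$ one has $H_{p_N}(x_s)\approx N$. The paper's one-line proof tacitly assumes such control, which holds e.g.\ if the roots stay in a fixed compact set.

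The genuine gap is your fallback. For $x\ge x_s$, the bound $\frac{x+\lambda}{x_s+\lambda}\le x/x_s$ amounts to replacing $p_N$ by $x^N$, whose Laplace maximizer is $1/s$, not $x_s$. Concretely, substituting $x=x_su$, your right-tail term equals $x_s\int_1^\infty\exp\bigl(N(\log u-a(u-1))\bigr)\,du$, where $a=sx_s=\frac1N\sum_i\frac{x_s}{x_s+\lambda_i}\le\frac{X}{X+\epsilon}<1$. The exponent is maximized at $u=1/a>1$, where it equals $a-1-\log a>0$. So $\frac1N\log$ of your bound tends to a strictly positive constant, not to something of order $\log N/N$. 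As written, the upper bound, and hence the proposition, is not proved.

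The missing idea is to use concavity of $\psi_s$ instead of comparing $p_N$ pointwise with $x^N$. Bound the integrand by $1$ on $[0,x_s+h]$, since $\psi_s\le 0$. For $x\ge x_s+h$, the tangent line at $x_s+h$ gives $\psi_s(x)\le\psi_s(x_s+h)+\psi_s'(x_s+h)(x-x_s-h)\le-\delta_N(x-x_s-h)$, where $\delta_N:=G_{p_N}(x_s)-G_{p_N}(x_s+h)>0$. Hence
\[
  \int_0^\infty e^{N\psi_s(x)}\,dx\le x_s+h+\frac{1}{N\delta_N}.
\]
Because $x_s\le X$, we have $\delta_N\ge\int\frac{h}{(X-t)(X+h-t)}\,d\meas{p_N}(t)$. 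The integrand is bounded and continuous on $(-\infty,-\epsilon]$, so by weak convergence $\delta_N$ tends to something at least $\int\frac{h\,d\mu(t)}{(X-t)(X+h-t)}>0$ and is bounded below for large $N$. This gives $\frac1N\log\int_0^\infty e^{N\psi_s}\,dx\le C/N$. Combined with your lower bound, this proves the proposition with no moment assumption on the roots.
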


\begin{proof}
  This follows immediately from Proposition \ref{prop:Laplace-correction}
  and Lemma \ref{lem:psi-two-sided}.
\end{proof}

We include Proposition~\ref{prop:L} to make explicit the sense in which $L_p^{(N)}$ approximates $L_p^{(\infty)}$. Although it is not used directly in the proof of the main theorem, it may be of independent interest.

\begin{lemma}\label{lem:comparison-kernel}
  \begin{enumerate}
    \item   Define
    \[
    T_p(x):=\frac1N\sum_{i=1}^N
    \frac{\lambda_i}{(x+\lambda_i)(x_s+\lambda_i)}.
  \]
  Then $T_p$ is monotonically decreasing, $0\le T_p(x)\le s$, and
  \[
    xG_p(x)-x_sG_p(x_s)=T_p(x)(x-x_s).
  \]
    \item   For $x\ge 0$, we have
  \[
    -\psi_s'(x)\le \sigma_s^2(x-x_s)\le -\frac{x}{x_s}\psi_s'(x).
  \]
  \end{enumerate}
\end{lemma}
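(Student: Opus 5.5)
The plan is to prove both parts by direct algebra, writing every quantity as an average over the roots and comparing term by term. The only inputs are $G_p(x)=\frac1N\sum_i (x+\lambda_i)^{-1}$, the defining relation $G_p(x_s)=s$, and $\psi_s'(x)=\phi_s'(x)=G_p(x)-s=G_p(x)-G_p(x_s)$.

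\textbf{Part 1.} I would write
\[
  xG_p(x)-x_sG_p(x_s)=\frac1N\sum_{i=1}^N\left(\frac{x}{x+\lambda_i}-\frac{x_s}{x_s+\lambda_i}\right).
\]
Putting each term over the common denominator $(x+\lambda_i)(x_s+\lambda_i)$ gives the numerator $x(x_s+\lambda_i)-x_s(x+\lambda_i)=\lambda_i(x-x_s)$, and hence the identity $xG_p(x)-x_sG_p(x_s)=T_p(x)(x-x_s)$.

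Since $\lambda_i>0$, each summand of $T_p$ is positive and decreasing in $x\ge0$. So $T_p$ is decreasing and $T_p(x)\ge 0$. For the upper bound,
\[
  T_p(x)\le T_p(0)=\frac1N\sum_{i=1}^N\frac{1}{x_s+\lambda_i}=G_p(x_s)=s .
\]

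\textbf{Part 2.} First I would compute
\[
  -\psi_s'(x)=G_p(x_s)-G_p(x)=(x-x_s)\,\frac1N\sum_{i=1}^N\frac{1}{(x+\lambda_i)(x_s+\lambda_i)},
\]
and recall that $\sigma_s^2(x-x_s)=(x-x_s)\frac1N\sum_i (x_s+\lambda_i)^{-2}$. Both inequalities then reduce to termwise comparisons, once the common factor $(x-x_s)/(x_s+\lambda_i)$ is divided out. I would split into the cases $x>x_s$ and $x<x_s$; the case $x=x_s$ is trivial since everything vanishes.

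For the left inequality, the termwise comparison is between $1/(x+\lambda_i)$ and $1/(x_s+\lambda_i)$.
\begin{itemize}
  \item If $x>x_s$, then $1/(x+\lambda_i)\le 1/(x_s+\lambda_i)$, and multiplying by $(x-x_s)>0$ gives the claim.
  \item If $x<x_s$, the inequality between the reciprocals reverses, but so does the sign of $(x-x_s)$. The product inequality therefore keeps the same direction.
\end{itemize}

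For the right inequality, the needed termwise statement is
\[
  (x-x_s)\frac{1}{x_s+\lambda_i}\le (x-x_s)\frac{x}{x_s(x+\lambda_i)} .
\]
For $x>x_s$ this is equivalent to $x_s(x+\lambda_i)\le x(x_s+\lambda_i)$, that is, $x_s\lambda_i\le x\lambda_i$, which holds. For $0\le x<x_s$ both the inequality and the sign of $(x-x_s)$ reverse, so it again holds. Here one needs $x_s>0$ and $x\ge0$ to keep all denominators positive.

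Summing the termwise inequalities over $i$ finishes the proof. There is no genuine obstacle; the only point requiring care is tracking the sign of $x-x_s$ in the two cases.
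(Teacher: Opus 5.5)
Your proof is correct and follows essentially the same route as the paper: a direct termwise comparison of the averages over the roots. Part 1 is identical except that you bound $T_p(x)\le T_p(0)=s$ using monotonicity, where the paper uses the termwise bound $\lambda_i/(x+\lambda_i)\le 1$. In Part 2 the paper avoids your case split on the sign of $x-x_s$ by writing $-\psi_s'(x)=\sigma_s^2(x-x_s)-\frac1N\sum_i \frac{(x-x_s)^2}{(x+\lambda_i)(x_s+\lambda_i)^2}$. There the correction term is manifestly nonnegative and is at most $\frac{x-x_s}{x_s}\,(-\psi_s'(x))$, but this difference is only cosmetic.
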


\begin{proof}
  We have
  \[
  \begin{aligned}
    xG_p(x)-x_sG_p(x_s)
    &= \frac1N\sum_{i=1}^N
       \left(
         \frac{x}{x+\lambda_i}-\frac{x_s}{x_s+\lambda_i}
       \right) \\
    &= \frac1N\sum_{i=1}^N
       \frac{\lambda_i(x-x_s)}{(x+\lambda_i)(x_s+\lambda_i)} \\
    &= T_p(x)(x-x_s),
  \end{aligned}
  \]
  which gives the first claim.
  Since each summand is nonnegative and
  \[
    \frac{\lambda_i}{(x+\lambda_i)(x_s+\lambda_i)}
    \le \frac{1}{x_s+\lambda_i},
  \]
  we obtain
  \[
    0\le T_p(x)\le \frac1N\sum_{i=1}^N \frac{1}{x_s+\lambda_i}=G_p(x_s)=s.
  \]

  Next,
  \[
    \psi_s'(x)= -s+G_p(x)=G_p(x)-G_p(x_s),
  \]
  and
  \[
  \begin{aligned}
    G_p(x)-G_p(x_s)
    &= -\frac1N\sum_{i=1}^N
       \frac{x-x_s}{(x+\lambda_i)(x_s+\lambda_i)} \\
    &= -\sigma_s^2(x-x_s)
       + \frac1N\sum_{i=1}^N
         \frac{(x-x_s)^2}{(x+\lambda_i)(x_s+\lambda_i)^2}.
  \end{aligned}
  \]
  Since the correction term is nonnegative, we obtain
  \[
    -\psi_s'(x)\le \sigma_s^2(x-x_s).
  \]
  Moreover,
  \[
    \frac{(x-x_s)^2}{(x+\lambda_i)(x_s+\lambda_i)^2}
    \le
    \frac{x-x_s}{x_s}\cdot
    \frac{x-x_s}{(x+\lambda_i)(x_s+\lambda_i)},
  \]
  and summing over $i$ yields
  \[
    \sigma_s^2(x-x_s)\le -\frac{x}{x_s}\psi_s'(x).
  \]
\end{proof}

\begin{proposition}\label{prop:R-comparison}
  For $s\in(0,\alpha)$, we have
\[
-\frac{\alpha}{sN x_s \sigma_s^2}
    \le
    R_p^{(N)}(s)-R_p^{(\infty)}(s)
    \le
    \frac{1}{N x_s \sigma_s^2}.
\]
\end{proposition}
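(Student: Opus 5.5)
The plan is to start from the second identity of Lemma~\ref{lem:R-expectation} together with part~1 of Lemma~\ref{lem:comparison-kernel}. Together they give
\[
  R_p^{(N)}(s)-R_p^{(\infty)}(s)=\frac1s\,E,\qquad E:=\int_0^\infty T_p(x)(x-x_s)\,\nu_{p,s}^{(N)}(dx).
\]
The whole argument then reduces to bounding $E$ from both sides by moments of $\nu_{p,s}^{(N)}$ that can be computed exactly by integration by parts.

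\textbf{Integration by parts.} Write $Z=\int_0^\infty e^{N\psi_s}\,dx$. Since $\psi_s\le 0$ and $\psi_s\to-\infty$ at infinity, the boundary terms at $\infty$ vanish, and one gets
\[
  \int_0^\infty(-\psi_s'(x))\,\nu_{p,s}^{(N)}(dx)=\frac{e^{N\psi_s(0)}}{NZ}\ge 0,
  \qquad
  \int_0^\infty x(-\psi_s'(x))\,\nu_{p,s}^{(N)}(dx)=\frac1N,
\]
and
\[
  \int_0^\infty (x-x_s)(-\psi_s'(x))\,\nu_{p,s}^{(N)}(dx)=\frac1N-\frac{x_s e^{N\psi_s(0)}}{NZ}\le\frac1N .
\]
Part~2 of Lemma~\ref{lem:comparison-kernel} then gives
\[
  0\le\frac{1}{\sigma_s^2}\int_0^\infty(-\psi_s')\,d\nu_{p,s}^{(N)}\le\int_0^\infty(x-x_s)\,\nu_{p,s}^{(N)}(dx)\le\frac{1}{x_s\sigma_s^2}\int_0^\infty x(-\psi_s')\,d\nu_{p,s}^{(N)}=\frac{1}{Nx_s\sigma_s^2}.
\]

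\textbf{Upper bound.} Since $T_p$ is decreasing, $T_p(x)(x-x_s)\le T_p(x_s)(x-x_s)$ for every $x\ge0$; this holds in both cases $x\gtrless x_s$. Integrating, and using $0\le T_p(x_s)\le s$ together with the nonnegativity of $\int(x-x_s)\,d\nu_{p,s}^{(N)}$, we get
\[
  E\le \frac{s}{Nx_s\sigma_s^2}.
\]
Dividing by $s$ yields the claimed upper bound.

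\textbf{Lower bound.} Split
\[
  E=T_p(x_s)\int(x-x_s)\,d\nu_{p,s}^{(N)}+\int (T_p(x)-T_p(x_s))(x-x_s)\,d\nu_{p,s}^{(N)} .
\]
The first term is $\ge0$, so only the second needs control. A direct computation gives
\[
  (T_p(x)-T_p(x_s))(x-x_s)=-\frac1N\sum_{i}\frac{\lambda_i}{x_s+\lambda_i}\cdot\frac{(x-x_s)^2}{(x+\lambda_i)(x_s+\lambda_i)}.
\]
Using $\lambda_i/(x_s+\lambda_i)\le1$, and recognizing
\[
  \frac1N\sum_i\frac{(x-x_s)^2}{(x+\lambda_i)(x_s+\lambda_i)}=(x-x_s)(-\psi_s'(x))
\]
from the proof of Lemma~\ref{lem:comparison-kernel}, this is $\ge -(x-x_s)(-\psi_s'(x))$. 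Hence, by the third integration-by-parts identity, $E\ge -1/N$, i.e.
\[
  R_p^{(N)}(s)-R_p^{(\infty)}(s)\ge -\frac{1}{sN}.
\]
Finally, this is at least as strong as the stated bound, because
\[
  x_s\sigma_s^2=\frac1N\sum_i\frac{x_s}{(x_s+\lambda_i)^2}\le\frac1N\sum_i\frac{1}{x_s+\lambda_i}=s\le\alpha ,
\]
so that $1/(sN)\le \alpha/(sNx_s\sigma_s^2)$.

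The main obstacle is the lower bound. The naive estimate via $T_p\le s$ only controls $\int(x_s-x)_+\,d\nu_{p,s}^{(N)}$, which is not available directly. The key is the exact factorization of $T_p(x)-T_p(x_s)$ above, which lets it be dominated by $(x-x_s)(-\psi_s')$, whose $\nu_{p,s}^{(N)}$-integral is computable by parts. One should also double-check that the boundary terms at $0$ have the favorable sign, which they do since they carry $-x_s e^{N\psi_s(0)}\le 0$.
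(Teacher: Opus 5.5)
Your proof is correct, but it handles $T_p$ differently from the paper. The paper multiplies both inequalities of part~2 of Lemma~\ref{lem:comparison-kernel} by $T_p(x)\ge 0$ and then integrates by parts with $T_p$ (respectively $xT_p$) as the weight. On the upper side it drops the term $xT_p'\le 0$ and uses $T_p\le s$. On the lower side it needs the derivative bound $-T_p'(x)\le \alpha/x_s$, and this is exactly where $\alpha$ enters the stated lower bound.

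You instead remove $T_p$ before integrating by parts:
\begin{itemize}
\item For the upper bound, monotonicity gives $T_p(x)(x-x_s)\le T_p(x_s)(x-x_s)$. This reduces everything to the first moment, which you show satisfies $0\le\int(x-x_s)\,\nu_{p,s}^{(N)}(dx)\le 1/(Nx_s\sigma_s^2)$.
\item For the lower bound, the exact factorization of $(T_p(x)-T_p(x_s))(x-x_s)$ lets you dominate the correction by $(x-x_s)(-\psi_s'(x))$. Its $\nu_{p,s}^{(N)}$-integral is computed exactly by parts, with no derivative of $T_p$ needed.
\end{itemize}

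Your route buys two things. First, a strictly sharper lower bound, $-1/(sN)$, which involves neither $\alpha$, $x_s$ nor $\sigma_s^2$; in the proof of Theorem~\ref{thm:main} only the upper side would then need the convergence of these quantities. Second, an explicit quantitative form of the concentration heuristic: the mean of $\nu_{p,s}^{(N)}$ lies to the right of $x_s$ by at most $1/(Nx_s\sigma_s^2)$. Your upper bound is also marginally sharper, namely $T_p(x_s)/(sNx_s\sigma_s^2)$. The paper's route is more uniform, treating both sides with the same single integration by parts.

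One small point to tidy: for the boundary term $x e^{N\psi_s(x)}$ at infinity, $\psi_s\to-\infty$ alone is not quite the right justification. What you need is that $e^{N\psi_s(x)}$ is a constant multiple of $p(x)e^{-Nsx}$, which decays exponentially.
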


\begin{proof}
  By Lemma \ref{lem:R-expectation}, it suffices to estimate
  \[
    A:=\int_0^\infty \bigl(xG_p(x)-x_sG_p(x_s)\bigr)e^{N\psi_s(x)}\,dx.
  \]
  By Lemma \ref{lem:comparison-kernel},
  \[
    A=\int_0^\infty T_p(x)(x-x_s)e^{N\psi_s(x)}\,dx.
  \]
  Using
  \[
    -\psi_s'(x)\le \sigma_s^2(x-x_s)\le -\frac{x}{x_s}\psi_s'(x),
  \]
  we obtain
  \[
    \int_0^\infty -T_p(x)\psi_s'(x)e^{N\psi_s(x)}\,dx
    \le
    \sigma_s^2 A
    \le
    \frac1{x_s}\int_0^\infty -xT_p(x)\psi_s'(x)e^{N\psi_s(x)}\,dx.
  \]

  Multiplying by $N$ and applying integration by parts to the left-hand side, we obtain
  \[
  \begin{aligned}
    N\sigma_s^2 A
    &\ge -\bigl[T_p(x)e^{N\psi_s(x)}\bigr]_{0}^{\infty}
         + \int_0^\infty T_p'(x)e^{N\psi_s(x)}\,dx \\
    &= T_p(0)e^{N\psi_s(0)}
       + \int_0^\infty T_p'(x)e^{N\psi_s(x)}\,dx.
  \end{aligned}
  \]
  Since $T_p(0)=s$ and
  \[
    -T_p'(x)
    = \frac1N\sum_{i=1}^N
      \frac{\lambda_i}{(x+\lambda_i)^2(x_s+\lambda_i)}
    \le \frac{1}{x_s}\frac1N\sum_{i=1}^N \frac{1}{\lambda_i}
    \le \frac{\alpha}{x_s},
  \]
  we get
  \[
    N\sigma_s^2 A
    \ge
    -\frac{\alpha}{x_s}\int_0^\infty e^{N\psi_s(x)}\,dx.
  \]

  Similarly, on the right-hand side,
  \[
  \begin{aligned}
    x_sN\sigma_s^2 A
    &\le -\bigl[xT_p(x)e^{N\psi_s(x)}\bigr]_0^\infty
         + \int_0^\infty (T_p(x)+xT_p'(x))e^{N\psi_s(x)}\,dx \\
    &\le \int_0^\infty T_p(x)e^{N\psi_s(x)}\,dx \\
    &\le s\int_0^\infty e^{N\psi_s(x)}\,dx.
  \end{aligned}
  \]

  Therefore
  \[
    -\frac{\alpha}{sN x_s \sigma_s^2}
    \le
    \frac{A}{s\int_0^\infty e^{N\psi_s(x)}\,dx}
    \le
    \frac{1}{N x_s \sigma_s^2}.
  \]
  By Lemma \ref{lem:R-expectation}, this is exactly
  \begin{equation*} 
    -\frac{\alpha}{sN x_s \sigma_s^2}
    \le
    R_p^{(N)}(s)-R_p^{(\infty)}(s)
    \le
    \frac{1}{N x_s \sigma_s^2}.
  \end{equation*}
  This is what we wanted.
\end{proof}

\begin{proof}[Proof of Theorem~\ref{thm:main}]
  Put $\alpha_N:=G_{\meas{p_N}}(0)$.
  Since $\meas{p_N}\weakto\mu$, we have $\alpha_N\to\alpha$.
  Hence, for each fixed $s\in(0,\alpha)$, we have $s\in(0,\alpha_N)$ for all sufficiently large $N$.

  Applying Proposition~\ref{prop:R-comparison} to $p_N$, and using again the weak convergence $\meas{p_N}\weakto\mu$, the quantities appearing in the bound converge to their counterparts for $\mu$.
  Thus
  \[
    R_{p_N}^{(N)}(s)-R_{\meas{p_N}}(s)=O(N^{-1}),
  \]
  as desired.
\end{proof}

\subsection{Applications}

We conclude the paper with several applications of Theorem~\ref{thm:main}.

\begin{corollary} \label{cor:convergenceOfFiniteFreeConvolution}
  Let $\epsilon > 0$, and let $(p_N)_{N=1}^\infty$ be a sequence of polynomials with $p_N \in \calP_N((-\infty, -\epsilon])$.
   \begin{enumerate}
    \item If the empirical root distributions $\meas{p_N}$ converge weakly to a probability measure $\mu$, then $R_{p_N}^{(N)}(s)$ converges to $R_\mu(s)$ for $s \in (0, G_\mu(0))$.
    \item  Suppose, in addition, that there exists $M>\epsilon$ such that $p_N\in\calP_N([-M,-\epsilon])$ for all $N$.
    If the finite $R$-transforms $R_{p_N}^{(N)}(s)$ converge to a function $R(s)$ defined on $(0,1/M)$, then there exists a probability measure $\mu \in \calM([-M, -\epsilon])$ such that $\meas{p_N} \weakto \mu$ and $R_\mu(s) = R(s)$.
   \end{enumerate}
\end{corollary}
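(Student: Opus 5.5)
For part (1), I will combine Theorem~\ref{thm:main} with a separate continuity statement for the Voiculescu $R$-transform. Write
\[
R_{p_N}^{(N)}(s)-R_\mu(s)=\bigl(R_{p_N}^{(N)}(s)-R_{\meas{p_N}}(s)\bigr)+\bigl(R_{\meas{p_N}}(s)-R_\mu(s)\bigr).
\]
The first term is $O(N^{-1})$ by Theorem~\ref{thm:main}, since the hypotheses coincide. For the second term, recall that $R_{\meas{p_N}}(s)=x_s^{(N)}-1/s$, where $x_s^{(N)}>0$ solves $G_{\meas{p_N}}(x)=s$. All measures are supported in $(-\infty,-\epsilon]$, so the kernel $t\mapsto 1/(x-t)$ is bounded and continuous there for $x\ge 0$. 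Weak convergence therefore gives $G_{\meas{p_N}}(x)\to G_\mu(x)$ pointwise on $[0,\infty)$, and in particular $\alpha_N\to\alpha$. These functions are monotone decreasing and continuous, so the convergence is locally uniform, and the strict monotonicity of $G_\mu$ then gives $x_s^{(N)}\to x_s$. Concretely, for any $\delta>0$ we have $G_\mu(x_s-\delta)>s>G_\mu(x_s+\delta)$, and the same inequalities hold for $G_{\meas{p_N}}$ once $N$ is large. Hence $R_{\meas{p_N}}(s)\to R_\mu(s)$, which proves (1).

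For part (2), I plan a compactness-plus-uniqueness argument. The supports lie in $[-M,-\epsilon]$, so $\{\meas{p_N}\}$ is tight. Every subsequence therefore has a further subsequence converging weakly to some $\mu'\in\calM([-M,-\epsilon])$. Along it, part (1) gives $R_{p_N}^{(N)}(s)\to R_{\mu'}(s)$ for $s\in(0,G_{\mu'}(0))$. Since $G_{\mu'}(0)=\int 1/(-t)\,d\mu'(t)\ge 1/M$, this range contains $(0,1/M)$. Consequently $R_{\mu'}=R$ on $(0,1/M)$.

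It then suffices to show that $R$ determines $\mu'$ on $[-M,-\epsilon]$. The function $R_{\mu'}(s)+1/s$ on $(0,1/M)$ is the inverse of $G_{\mu'}$ restricted to $(x_{1/M},\infty)$, where $x_{1/M}\le 0$ is the point with $G_{\mu'}(x_{1/M})=1/M$ (this uses $G_{\mu'}(0)\ge 1/M$). So $G_{\mu'}$ is determined on a half-line $(c,\infty)$. By analyticity of $G_{\mu'}$ on $\C\setminus[-M,-\epsilon]$, it is then determined everywhere, and Stieltjes inversion recovers $\mu'$. All subsequential limits therefore coincide with a single $\mu$, so $\meas{p_N}\weakto\mu$ and $R_\mu=R$.

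The main obstacle I expect is making sure the domain $(0,1/M)$ lies inside $(0,G_{\mu'}(0))$ for every possible limit. Here the bound $G_{\mu'}(0)\ge 1/M$ is essential. I also need to check the edge case $G_{\mu'}(0)=1/M$ exactly: this occurs only when $\mu'=\delta_{-M}$, and the interval is open, so it is harmless. The identity-theorem step is routine, because $G$ is real-analytic and determined on an interval.
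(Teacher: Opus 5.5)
Your proposal is correct and follows the paper's route: the paper combines Theorem~\ref{thm:main} with the standard fact that, for compactly supported measures, weak convergence is equivalent to convergence of the Voiculescu $R$-transform near the origin, and you prove the needed directions of that fact yourself. For part (1) you use monotonicity of $G$ on $[0,\infty)$, which has the advantage of not requiring compact support there; for part (2) you use tightness plus the identity theorem. One harmless slip: since $G_{\mu'}$ is decreasing with $G_{\mu'}(0)\ge 1/M$, the point $x_{1/M}$ satisfies $x_{1/M}\ge 0$, not $x_{1/M}\le 0$, but your argument only uses that $G_{\mu'}$ is determined on some half-line.
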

\begin{proof}
  These assertions follow from Theorem~\ref{thm:main}, together with the standard fact that, for compactly supported measures, weak convergence is equivalent to convergence of the Voiculescu $R$-transforms near the origin.
\end{proof}

\begin{corollary} \label{cor:finitefreeadditive}
  Let $0<a<b$, and let $p_N,q_N \in \calP_N([-b,-a])$ satisfy
  \[
    \meas{p_N}\weakto\mu,
    \qquad
    \meas{q_N}\weakto\nu
  \]
  for probability measures $\mu,\nu \in \calM(\R_{\le 0})$.
  Then
  \[
    \meas{p_N \boxplus_N q_N} \weakto \mu \boxplus \nu.
  \]
\end{corollary}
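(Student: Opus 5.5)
Throughout, $r_N := p_N\boxplus_N q_N$. The plan is to transfer linearity of the finite $R$-transform to the limit using Corollary~\ref{cor:convergenceOfFiniteFreeConvolution}.

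\emph{Step 1: the roots of $r_N$ stay in $[-2b,-2a]$.} By the random matrix formula, $r_N(x)=\E_U[\det(xI-A-UBU^*)]$ with $A,B$ diagonal and spectra in $[-b,-a]$. Real-rootedness of $p\boxplus_N q$ (Marcus--Spielman--Srivastava) places all roots of $r_N$ within the convex hull bounds $[\lambda_{\min}(p)+\lambda_{\min}(q),\lambda_{\max}(p)+\lambda_{\max}(q)]$. Hence $r_N\in\calP_N([-2b,-2a])$. I expect this to be the main obstacle, since it is not proved in the paper; I would cite \cite{MSS22} for it. If needed, the support bound can also be obtained from $\widehat{r}\overset{N}{=}\widehat p\,\widehat q$ by translation: shift $p$ by $-\lambda_{\max}$ so that it has nonnegative roots and use the convolution identity.

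\emph{Step 2: convergence of finite $R$-transforms.} Corollary~\ref{cor:convergenceOfFiniteFreeConvolution}(1) applies with $\epsilon=a$, since $\mu,\nu$ are supported in $[-b,-a]$. So $R^{(N)}_{p_N}\to R_\mu$ and $R^{(N)}_{q_N}\to R_\nu$ pointwise on $(0,1/b)\subset(0,\min(G_\mu(0),G_\nu(0)))$.

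\emph{Step 3: from truncated linearity to exact additivity in the limit.} Identity~\eqref{eq:linearityOfR} only holds modulo $s^N$, so I need a statement about the functions, not just the series. Since all roots of $p_N,q_N,r_N$ lie in $[-2b,-a]$, the normalized coefficients satisfy $|\coef_k|\le(2b)^k$. Hence $\widehat p(Ns)$ is an entire function bounded by the truncated $e^{2bNs}$. A cleaner route is through the cumulants: linearity of $\kappa_n^{(N)}$ for $n\le N$ gives
\[
R^{(N)}_{r_N}-R^{(N)}_{p_N}-R^{(N)}_{q_N}=\sum_{n\ge N}\bigl(\kappa^{(N)}_{n+1}(r_N)-\kappa^{(N)}_{n+1}(p_N)-\kappa^{(N)}_{n+1}(q_N)\bigr)s^n .
\]
This tail tends to $0$ for small $s$, provided $|\kappa^{(N)}_n|\le C^n$ uniformly in $N$. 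Such a bound holds since $\widehat p(Ns)$ has no zeros in a disc $|s|<\delta$ with $\delta$ independent of $N$: by Proposition~\ref{prop:Laplace-FFF} we have $\widehat p(Ns)=\frac{(Ns)^{N+1}}{N!}\Lap{p}(Ns)$, and the Laplace integral is nonvanishing for $\mathrm{Re}\, s$ in a suitable range. Then apply Cauchy estimates on a smaller disc. If this uniform bound proves hard, I would instead compare directly using the integral representation of Lemma~\ref{lem:R-expectation}, which is exact and valid for real $s\in(0,1/(2b))$.

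\emph{Step 4: conclusion.} Combining Steps 2 and 3, $R^{(N)}_{r_N}(s)\to R_\mu(s)+R_\nu(s)=R_{\mu\boxplus\nu}(s)$ on a small interval $(0,\delta)$. By Step 1, the measures $\meas{r_N}$ are supported in the fixed compact $[-2b,-2a]$. Part (2) of Corollary~\ref{cor:convergenceOfFiniteFreeConvolution}, together with analytic continuation to identify the limit on $(0,1/M)$, yields a limit $\rho$ with $R_\rho=R_{\mu\boxplus\nu}$ near $0$. Therefore $\rho=\mu\boxplus\nu$, and $\meas{r_N}\weakto\mu\boxplus\nu$.
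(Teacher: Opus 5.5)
\textbf{There is a genuine gap in Step 3}, which is the only step that goes beyond results already in the paper. You need $R^{(N)}_{r_N}-R^{(N)}_{p_N}-R^{(N)}_{q_N}\to 0$. You reduce this to an $N$-uniform bound $|\kappa^{(N)}_n|\le C^n$ for \emph{all} $n$, including $n>N$, and you base that bound on $\widehat p(Ns)$ being zero-free on a disc $|s|<\delta$ with $\delta$ independent of $N$. The reason you give does not establish this.

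The identity $\widehat p(Ns)=\frac{(Ns)^{N+1}}{N!}\Lap{p}(Ns)$ of Proposition~\ref{prop:Laplace-FFF} is an integral identity only for $\mathrm{Re}\,s>0$. It therefore says nothing on half of any disc centred at $0$. Moreover, for non-real $s$ the integrand $p(x)e^{-Nsx}$ oscillates, so no positivity rules out cancellation, and nothing in the paper treats complex $s$. Zero-freeness alone would also not give uniform Cauchy estimates without an extra Borel--Carath\'eodory step using $|\widehat p(Ns)|\le e^{2bN|s|}$.

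The zero-free disc is in fact true, e.g.\ via the Grace--Walsh--Szeg\H{o} theorem, which reduces it to zeros of truncated exponentials $\sum_{k\le N}(\zeta Ns)^k/k!$. But that is a separate argument you have not supplied. Your fallback does not help either: Lemma~\ref{lem:R-expectation} is a formula for one polynomial at a time and carries no information linking $r_N$ to $p_N,q_N$.

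The paper never has to control the size of the defect; it uses a sandwich.
\begin{itemize}
\item Proposition~\ref{prop:inequalityR} gives $R^{(N)}_{r_N}\ge R^{(N)}_{p_N}+R^{(N)}_{q_N}$ exactly, for all $s>0$.
\item The Marcus--Spielman--Srivastava inequality \eqref{eq:inequality} gives $R_{\meas{r_N}}\le R_{\meas{p_N}}+R_{\meas{q_N}}$.
\item Along a subsequence with $\meas{r_N}\weakto\rho$, Corollary~\ref{cor:convergenceOfFiniteFreeConvolution}(1) then yields $R_\rho=R_\mu+R_\nu$ on $(0,1/(2b))$.
\end{itemize}

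Your linearization route can nevertheless be repaired by a purely real estimate. Take $f,g,d_k$ as in the proof of Proposition~\ref{prop:inequalityR}. By \eqref{eq:equality} the defect equals $g'(Ns)/(1-g(Ns))$. On $[0,\infty)$ we have $f\ge 0$, $\widehat r'\ge 0$ and $\widehat p\,\widehat q\ge 1$, and $d_k\le (2b)^k$. Hence
\[
0\le \frac{g'(Ns)}{1-g(Ns)}\le \frac{f'(Ns)}{\widehat p(Ns)\,\widehat q(Ns)}\le 2b\sum_{m\ge N}\frac{(2bNs)^m}{m!}\le \frac{2b\,(2bes)^N}{1-2bs},
\]
which tends to $0$ for $0<s<1/(2be)$. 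This version would even dispense with \eqref{eq:inequality}.

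In Step 4, you cannot invoke Corollary~\ref{cor:convergenceOfFiniteFreeConvolution}(2), which requires convergence on all of $(0,1/M)$. Pointwise convergence on $(0,\delta)$ does not analytically continue by itself. Instead, argue along subsequences with part (1), using tightness from Step 1, as the paper does. Step 1 is not an obstacle: it is the cited root bound, used exactly as in the paper.
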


For the proof of Corollary~\ref{cor:finitefreeadditive}, we use the following inequality.

\begin{proposition} \label{prop:inequalityR}
  Let $N\in\N$, and let $p,q \in \calP_N(\R_{<0})$.
  Then
  \[
    R^{(N)}_{p \boxplus_N q}(s)
    \ge
    R^{(N)}_{p}(s) + R^{(N)}_{q}(s)
  \]
  for all $s>0$.
\end{proposition}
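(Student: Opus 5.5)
The plan is to reduce the inequality to a monotonicity statement about a ratio of FFF transforms. Put $r:=p\boxplus_N q$, $t:=Ns>0$ and $g(t):=\widehat{p}(t)\widehat{q}(t)$. By definition $R^{(N)}_p(s)=-\widehat{p}'(t)/\widehat{p}(t)$, and similarly for $q$ and $r$. Hence
\[
  R^{(N)}_{r}(s)-R^{(N)}_p(s)-R^{(N)}_q(s)=\frac{d}{dt}\log\frac{g(t)}{\widehat{r}(t)} .
\]
So it suffices to show that $t\mapsto \widehat{r}(t)/g(t)$ is non-increasing on $(0,\infty)$.

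First I note that all three transforms are positive on $(0,\infty)$, so the logarithms make sense. By Proposition~\ref{prop:Laplace-FFF},
\[
  \widehat{p}(t)=\frac{t^{N+1}}{N!}\Lap{p}(t),
\]
and $\Lap{p}(t)>0$ because $p>0$ on $[0,\infty)$. The same holds for $q$, and also for $r$, since $r\in\calP_N(\R_{<0})$ by real-rootedness preservation of $\boxplus_N$, or directly from the random matrix formula with positive definite $A,B$.

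Next I want an integral representation of $\widehat{r}$ in terms of $p$ and $q$. I will use the identity $r=\widehat{p}(\partial_x)q$, which follows from $\widehat{r}\overset{N}{=}\widehat{p}\,\widehat{q}$ together with $\widehat{q}(\partial_x)x^N=q$. Taking Laplace transforms and integrating by parts $k$ times gives
\[
  \int_0^\infty q^{(k)}(x)e^{-tx}\,dx=t^k\Lap{q}(t)-\sum_{j=0}^{k-1}t^{k-1-j}q^{(j)}(0).
\]
Substituting this, I obtain
\[
  \widehat{r}(t)=g(t)-\frac{t^{N+1}}{N!}B(t),
\]
where $B(t)=\sum_k \frac{(-1)^k\coef_k(p)}{k!}\sum_{j<k}t^{k-1-j}q^{(j)}(0)$. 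Equivalently, $g-\widehat{r}$ is exactly the part of $g$ of degree $>N$ in $t$. I would reorganize this as a single integral,
\[
  \frac{\widehat{r}(t)}{g(t)}=\frac{\int_0^\infty r(x)e^{-tx}\,dx}{\Lap{p}(t)\Lap{q}(t)} .
\]
Writing $\Lap{p}\Lap{q}$ as the Laplace transform of the convolution $p*q$, which has degree $2N+1$ with positive coefficients since $p,q$ have positive coefficients, the ratio becomes a comparison of two Laplace transforms of positive functions.

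The monotonicity would then follow by a Chebyshev/stochastic-ordering argument. The derivative of the log of the ratio equals $\E_{\rho_2}[x]-\E_{\rho_1}[x]$, where $\rho_1\propto r(x)e^{-tx}dx$ and $\rho_2\propto (p*q)(x)e^{-tx}dx$. It therefore suffices that $x\mapsto (p*q)(x)/r(x)$ is non-decreasing on $[0,\infty)$, because a monotone likelihood ratio implies stochastic domination. Heuristically this is plausible: $p*q$ has degree $2N+1$ while $r$ has degree $N$, and both have positive coefficients.

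I expect this last step to be the main obstacle: proving the monotone likelihood ratio $(p*q)/r\uparrow$, or some substitute for it. If the direct ratio is hard to control, I would fall back on a coefficientwise argument. The fallback is to expand $g$ as a power series $\sum_n g_n t^n$ and $\widehat{r}$ as its truncation $\sum_{n\le N}g_nt^n$. The desired inequality is equivalent to $g'\widehat{r}-g\widehat{r}'\ge0$, which reads
\[
  \sum_{m\le N<n}(n-m)g_ng_mt^{n+m-1}\ge0 .
\]
The difficulty is that the $g_n$ alternate in sign. So I would try to pair terms using the Laplace-type positivity from the representation above, or an interlacing/log-concavity property of the coefficients $\coef_k$, which are log-concave by Newton's inequalities since the roots are real.
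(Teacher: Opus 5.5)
Your proposal does not reach a proof. The cause is a sign error that makes you abandon the one argument that works. You say the coefficients $g_n$ of $g=\widehat{p}\,\widehat{q}$ alternate in sign; they do not. For $p\in\calP_N(\R_{<0})$ all roots are negative, so $\coef_k(p)$ has sign $(-1)^k$. Hence $\widehat{p}(t)=\sum_k(-1)^k\coef_k(p)t^k/k!$ has strictly positive coefficients; for instance, for $p=(x+\lambda)^N$ it is $\sum_k\lambda^kt^k/k!$. The same holds for $\widehat{q}$, so $g_n>0$ for $0\le n\le 2N$, and $\widehat{r}=\sum_{n\le N}g_nt^n$. Your fallback identity
\[
  g'\widehat{r}-g\,\widehat{r}'=\sum_{m\le N<n}(n-m)\,g_ng_m\,t^{n+m-1}
\]
is then termwise positive for $t>0$, which finishes the proof at once. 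This is exactly the paper's argument. The paper sets $f=\widehat{p}\,\widehat{q}-\widehat{r}$, notes that $f$ and $\widehat{r}$ have positive coefficients, and shows $f'\widehat{r}-f\widehat{r}'>0$, which is the same sum. Positivity of $\widehat r$ on $(0,\infty)$ also follows directly, so real-rootedness of $r$ is not needed.

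The Laplace/likelihood-ratio route you present as your main plan is wrong in two places as written.

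\begin{enumerate}
  \item The ratio identity drops a factor. From $\widehat{r}=\frac{t^{N+1}}{N!}\Lap{r}$ and $g=\frac{t^{2N+2}}{(N!)^2}\Lap{p}\Lap{q}$ one gets $\widehat{r}/g=\frac{N!}{t^{N+1}}\,\Lap{r}/(\Lap{p}\Lap{q})$. The ratio you wrote behaves like $t^{N+1}/N!$ as $t\to0$ and like $t\,r(0)/(p(0)q(0))$ as $t\to\infty$, so it is not non-increasing.
  \item The monotone-likelihood-ratio condition points the wrong way. If $(p*q)/r$ is non-decreasing, then $\rho_2$ stochastically dominates $\rho_1$. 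This gives $\E_{\rho_2}[x]-\E_{\rho_1}[x]\ge 0$, so the ratio \emph{increases}.
\end{enumerate}

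The route can be repaired. One has $\widehat{r}/g=\Lap{x^N*r}/\Lap{p*q}$, and a coefficient computation shows that $x^N*r$ is precisely the part of $p*q$ of degree $\ge N+1$. Then $(p*q)/(x^N*r)=1+(\text{lower part})/(\text{upper part})$ is non-increasing, and MLR in the correct direction gives the claim. But checking this uses exactly the positivity of the $g_n$, so it is the coefficient argument in disguise.
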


This should be contrasted with \cite[Theorem~1.12]{MSS22}, which gives an inequality in the opposite direction:
\begin{equation} \label{eq:inequality}
    R_{p \boxplus_N q}^{(\infty)}(s)
    \le
    R_p^{(\infty)}(s) + R_q^{(\infty)}(s).
\end{equation}

\begin{proof}
  Put $r = p \boxplus_N q$ and $f = \widehat{p} \widehat{q} - \widehat{r}.$
  Since $\widehat{r}(s)\overset{N}{=}\widehat{p}(s)\widehat{q}(s)$, the polynomial $f$ has the form
  \[
    f(s) = \sum_{k=N+1}^{2N} \frac{d_k}{k!}s^k,
  \]
  where the coefficients $d_k =
    (-1)^k
    \sum_{i+j=k}
    \binom{k}{i}
    \coef_i(p)\coef_j(q)$ are positive.
  We also write $\widehat{r}$ in a similar form:
  \[
    \widehat{r}(s) = \sum_{k=0}^{N} \frac{c_k}{k!}s^k,
  \]
  with $c_k>0$.

  Taking logarithms, we obtain
  \[
    \log \widehat{p}(s) + \log \widehat{q}(s)
    =
    \log \widehat{r}(s) - \log (1 - g(s)),
  \]
  where $g(s) := {f(s)}/{\widehat{p}(s)\widehat{q}(s)}.$
  Note that $0< 1 - g(s) = {\widehat{r}(s)}/{\widehat{p}(s)\widehat{q}(s)} < 1$ for $s>0$.
  Differentiating, we obtain
  \[
    \frac{\widehat{p}'(s)}{\widehat{p}(s)}
    +
    \frac{\widehat{q}'(s)}{\widehat{q}(s)}
    =
    \frac{\widehat{r}'(s)}{\widehat{r}(s)}
    +
    \frac{g'(s)}{1-g(s)}.
  \]
  By the definition of the finite $R$-transform, this yields
  \begin{equation} \label{eq:equality}
        R_{p}^{(N)}(s) + R_{q}^{(N)}(s)
    =
    R_{r}^{(N)}(s)
    -
    \frac{g'(Ns)}{1-g(Ns)}.
  \end{equation}
  Thus it remains to show that $g'(s)>0$ for $s>0$.
Indeed,
\[
  g'(s)
  =
  \frac{
    f'(s)(\widehat{p}\widehat{q})(s)
    -
    f(s)(\widehat{p}\widehat{q})'(s)
  }
  {(\widehat{p}(s)\widehat{q}(s))^2}
  =
  \frac{
    f'(s)\widehat{r}(s)
    -
    f(s)\widehat{r}'(s)
  }
  {(\widehat{p}(s)\widehat{q}(s))^2}.
\]
  Moreover,
\begin{align}
  f'(s) \widehat{r}(s) - f(s) \widehat{r}'(s)
  &=
  \left(
    \sum_{k=N+1}^{2N} \frac{d_k}{k!} k s^{k-1}
  \right)
  \left(
    \sum_{l=0}^{N} \frac{c_l}{l!} s^l
  \right) \notag \\
  &\quad -
  \left(
    \sum_{k=N+1}^{2N} \frac{d_k}{k!} s^k
  \right)
  \left(
    \sum_{l=0}^{N} \frac{c_l}{l!} l s^{l-1}
  \right) \notag \\
  &=
  \sum_{k=N+1}^{2N}\sum_{l=0}^{N}
  \frac{d_k c_l}{k!l!}
  (k-l)s^{k+l-1}. \label{eq:expression}
\end{align}
Since $k-l>0$ in the last sum, Eq.~\eqref{eq:expression} implies $g'(s)>0$ for $s>0$.
Hence the correction term $g'(Ns)/(1-g(Ns))$ in Eq. \eqref{eq:equality} is positive, and the desired inequality follows.
\end{proof}

\begin{proof}[Proof of Corollary~\ref{cor:finitefreeadditive}]
  Set $r_N:=p_N\boxplus_N q_N$. By Walsh's theorem, or by \cite[Theorem~1.3]{MSS22}, we have
  \[
    r_N\in \calP_N([-2b,-2a]).
  \]
  Hence the sequence $(\meas{r_N})$ is tight, and every subsequence has a weakly convergent subsequence.

  Let $(\meas{r_{N_k}})$ be such a subsequence and let $\meas{r_{N_{k_l}}}\weakto \rho$. By Corollary~\ref{cor:convergenceOfFiniteFreeConvolution} and Proposition~\ref{prop:inequalityR}, we obtain
  \[
    R_\rho(s)
    \ge
    R_\mu(s)+R_\nu(s)
  \]
  for $s \in (0, 1/2b)$. On the other hand, \cite[Theorem~1.12]{MSS22}, in the form \eqref{eq:inequality}, gives the reverse inequality
  \[
    R_\rho(s)
    \le
    R_\mu(s)+R_\nu(s).
  \]
  Therefore
  \[
    R_\rho(s)=R_\mu(s)+R_\nu(s)=R_{\mu\boxplus\nu}(s)
  \]
  for $s \in (0, 1/2b)$, and hence $\rho=\mu\boxplus\nu$.
  Since every subsequence of $(\meas{r_N})$ has a further subsequence converging to $\mu\boxplus\nu$, we conclude that
  \[
    \meas{p_N\boxplus_N q_N}\weakto \mu\boxplus\nu.
  \]
\end{proof}

\section*{Acknowledgements}

The authors gratefully acknowledge the financial support from the grant CONAHCYT A1-S-9764 and JSPS Open Partnership Joint Research Projects Grant (No.~JPJSBP120209921).
K.F. is supported by JSPS Research Fellowship for Young Scientists PD (KAKENHI Grant No.~24KJ1318).
This work was primarily carried out during the second author's stay at CIMAT as a JSPS postdoctoral fellow affiliated with Kyoto University.

\vspace{6mm}

\begin{enumerate}
  \item[]
  \hspace{-10mm} Octavio Arizmendi\\
  Centro de Investigación en Matemáticas, Guanajuato, Gto. 36000, Mexico\\
  Email: octavius@cimat.mx

  \item[]
  \hspace{-10mm} Katsunori Fujie\\
  Department of Information Science and Technology, Aichi Prefectural University\\
  Nagakute, Aichi, 480-1198, Japan\\
  Email: kfujie@ist.aichi-pu.ac.jp
\end{enumerate}

\end{document}